\documentclass[a4paper,12pt]{amsart}

\usepackage{fullpage}
\usepackage{amsmath,amssymb,graphicx,psfrag}
\usepackage[T1]{fontenc}
\usepackage{amsmath,amssymb,ifthen}
\usepackage{color}
\usepackage{moreverb}
\usepackage{enumitem}
\usepackage{pgfplots}
\usepackage{adjustbox}
\usepackage{algorithmic}
\usepackage{mathtools}
\usepackage{IEEEtrantools}
%\usepackage{refcheck}

%%%%%%%%%%%%%%%%%%%%%%%%%%%%%%%%%%%%%%%%%%%%%%%%%%%%%%%%%%%%%%%%%%%%%%%%%%%%%%%%%%%
%%%%%%%%%%%%%%%%%%%%%%%%%%%%%%%%%%%%%%%%%%%%%%%%%%%%%%%%%%%%%%%%%%%%%%%%%%%%%%%%%%%
%%% DEFINITIONS DPR

\newcommand{\set}[3][\big]{#1\{#2\,:\,#3#1\}}

%\def\revision#1{#1}

%%%%%%%%%%%%%%%%%%%%%%%%%%%%%%%%%%%%%%%%%%%%%%%%%%%%%%%%%%%%%%%%%%%%%%%%%%%%%%%%%%%
%%%%%%%%%%%%%%%%%%%%%%%%%%%%%%%%%%%%%%%%%%%%%%%%%%%%%%%%%%%%%%%%%%%%%%%%%%%%%%%%%%%
%%% ENVIRONMENTS DPR

\newcounter{statement}
\newenvironment{statement}[2][!]{%
	% \begin{statement}
	\vskip3mm
	\hrule
	\hrule
	\hrule
	\vskip1mm
	\noindent%
	%dpr%\begin{boxedminipage}{\textwidth}
	%%\begin{mdframed}[everyline=true]
	\refstepcounter{statement}%
	\bf#2~\thestatement%
	\ifthenelse{\equal{#1}{!}}{.\ }{~(#1).\ }%
	\it%
}{%
	% \end{statement}
	%dpr%\end{boxedminipage}
	%%\end{mdframed}
	\vskip1mm
	\hrule
	\hrule
	\hrule
	\vskip2mm
	%\vspace{1mm}
}

\newenvironment{theorem}[1][!]{\begin{statement}[#1]{Theorem}}{\end{statement}}
\newenvironment{lemma}[1][!]{\begin{statement}[#1]{Lemma}}{\end{statement}}

\newenvironment{remark}[1][!]{\begin{statement}[#1]{Remark}}{\end{statement}}

\newenvironment{definition}[1][!]{\begin{statement}[#1]{Definition}}{\end{statement}}

\renewcommand{\subsection}[1]{%
	\vskip2mm
	\refstepcounter{subsection}%
	{\bf\arabic{section}.\arabic{subsection}.~#1.~}%
}

%%%%%%%%%%%%%%%%%%%%%%%%%%%%%%%%%%%%%%%%%%%%%%%%%%%%%%%%%%%%%%%%%%%%%%%%%%%%%%%%%%%
%%%%%%%%%%%%%%%%%%%%%%%%%%%%%%%%%%%%%%%%%%%%%%%%%%%%%%%%%%%%%%%%%%%%%%%%%%%%%%%%%%%
%%% DEFINITIONS

\newcommand{\R}{\mathbb{R}}

\renewcommand{\S}{\mathbb{S}}
\newcommand{\EE}{\mathcal{E}}

\newcommand{\DD}{\mathcal{D}}
\newcommand{\helical}{\mathfrak{h}}

% measures
\newcommand{\abs}[2][]{#1|\,#2\,#1|}

\newcommand{\norm}[3][]{#1\|{#2}#1\|_{#3}}
\newcommand{\skp}[4][]{#1\langle{#2},{#3}#1\rangle_{#4}}

% integration / differentiation
\renewcommand{\d}[1]{\mathrm{\, d}#1}

% operators

\DeclareMathOperator{\curl}{curl}

% constants
\newcommand{\lex}{\ell_\mathrm{ex}}

% vectors
\renewcommand{\vec}[1]{\boldsymbol{#1}}
\newcommand{\normalv}{\vec{n}}
\newcommand{\m}{\vec{m}}
\newcommand{\w}{\vec{w}}
\renewcommand{\u}{\vec{u}}
\newcommand{\heff}{\vec{H}_\mathrm{eff}}

%%%%%%%%%%%%%%%%%%%%%%%%%%%%%%%%%%%%%%%%%%%%%%%%%%%%%%%%%%%%%%%%%%%%%%%%%%%%%%%%%%%
%%%%%%%%%%%%%%%%%%%%%%%%%%%%%%%%%%%%%%%%%%%%%%%%%%%%%%%%%%%%%%%%%%%%%%%%%%%%%%%%%%%
%%% Put date on each page

\usepackage{fancyhdr}
\lfoot{\small\today}
\cfoot{\small\thepage}
\lhead{}
\rhead{}

\advance\footskip0.4cm
\textheight=54pc    %a4paper
\advance\textheight-0.4cm
\calclayout
\pagestyle{fancy}
% --> add \thispagestyle{fancy} after \maketitle

%%%%%%%%%%%%%%%%%%%%%%%%%%%%%%%%%%%%%%%%%%%%%%%%%%%%%%%%%%%%%%%%%%%%%%%%%%%%%%%%%%%
%%%%%%%%%%%%%%%%%%%%%%%%%%%%%%%%%%%%%%%%%%%%%%%%%%%%%%%%%%%%%%%%%%%%%%%%%%%%%%%%%%%
%%% LINE NUMBERS

\newcommand*\patchAmsMathEnvironmentForLineno[1]{%
	\expandafter\let\csname old#1\expandafter\endcsname\csname #1\endcsname
	\expandafter\let\csname oldend#1\expandafter\endcsname\csname end#1\endcsname
	\renewenvironment{#1}%
	{\linenomath\csname old#1\endcsname}%
	{\csname oldend#1\endcsname\endlinenomath}}% 
\newcommand*\patchBothAmsMathEnvironmentsForLineno[1]{%
	\patchAmsMathEnvironmentForLineno{#1}%
	\patchAmsMathEnvironmentForLineno{#1*}}%
\AtBeginDocument{%
	\patchBothAmsMathEnvironmentsForLineno{equation}%
	\patchBothAmsMathEnvironmentsForLineno{align}%
	\patchBothAmsMathEnvironmentsForLineno{flalign}%
	\patchBothAmsMathEnvironmentsForLineno{alignat}%
	\patchBothAmsMathEnvironmentsForLineno{gather}%
	\patchBothAmsMathEnvironmentsForLineno{multline}%
}
\usepackage[mathlines]{lineno}

%\linenumbers

%%%%%%%%%%%%%%%%%%%%%%%%%%%%%%%%%%%%%%%%%%%%%%%%%%%%%%%%%%%%%%%%%%%%%%%%%%%%%%%%%%%
%%%%%%%%%%%%%%%%%%%%%%%%%%%%%%%%%%%%%%%%%%%%%%%%%%%%%%%%%%%%%%%%%%%%%%%%%%%%%%%%%%%
%%% TITLE, AUTHORS, ETC.

\title{Weak-Strong Uniqueness for the Landau--Lifshitz--Gilbert equation in Micromagnetics}

\author{Giovanni Di Fratta}
\address{TU Wien, Institute for Analysis and Scientific Computing, Wiedner Hauptstr.~8--10/E101/4, 1040 Wien, Austria}
\email{Giovanni.DiFratta@asc.tuwien.ac.at}

\author{Michael Innerberger}
\address{TU Wien, Institute for Analysis and Scientific Computing, Wiedner Hauptstr.~8--10/E101/4, 1040 Wien, Austria}
\email{Michael.Innerberger@asc.tuwien.ac.at \qquad\rm (corresponding author)}

\author{Dirk Praetorius}
\address{TU Wien, Institute for Analysis and Scientific Computing, Wiedner Hauptstr.~8--10/E101/4, 1040 Wien, Austria}
\email{Dirk.Praetorius@asc.tuwien.ac.at}

\thanks{{\bf Acknowledgment:} The authors acknowledge support through the Austrian Science Fund (FWF) through the doctoral school \emph{Dissipation and dispersion in nonlinear PDEs} (grant W1245) and the special research program \emph{Taming complexity in PDE systems} (grant SFB F65).}

\thispagestyle{fancy}
\date{\today}

\begin{document}%%%%%%%%%%%%%%%%%%%%%%%%%%%%%%%%%%%%%%%%%%%%%%%%%%%%%%%%%%%%%%%%%%%
	
%%%%%%%%%%%%%%%%%%%%%%%%%%%%%%%%%%%%%%%%%%%%%%%%%%%%%%%%%%%%%%%%%%%%%%%%%%%%%%%%%%%
%%%%%%%%%%%%%%%%%%%%%%%%%%%%%%%%%%%%%%%%%%%%%%%%%%%%%%%%%%%%%%%%%%%%%%%%%%%%%%%%%%%
\begin{abstract}
We consider the time-dependent Landau--Lifshitz--Gilbert equation. We prove that each weak solution coincides with the (unique) strong solution, as long as the latter exists in time. Unlike available results in the literature, our analysis also includes the physically relevant lower-order terms like Zeeman contribution, anisotropy, stray field, and the Dzyaloshinskii--Moriya interaction (which accounts for the emergence of magnetic Skyrmions). Moreover, our proof gives a template on how to approach weak-strong uniqueness for even more complicated problems, where LLG is (nonlinearly) coupled to other (nonlinear) PDE systems.
\end{abstract}
\maketitle
%%%%%%%%%%%%%%%%%%%%%%%%%%%%%%%%%%%%%%%%%%%%%%%%%%%%%%%%%%%%%%%%%%%%%%%%%%%%%%%%%%%
%%%%%%%%%%%%%%%%%%%%%%%%%%%%%%%%%%%%%%%%%%%%%%%%%%%%%%%%%%%%%%%%%%%%%%%%%%%%%%%%%%%

%\clearpage
%!TEX root = weak-strong.tex

%%%%%%%%%%%%%%%%%%%%%%%%%%%%%%%%%%%%%%%%%%%%%%%%%%%%%%%%%%%%%%%%%%%%%%%%%%%%%%%%%%%
%%%%%%%%%%%%%%%%%%%%%%%%%%%%%%%%%%%%%%%%%%%%%%%%%%%%%%%%%%%%%%%%%%%%%%%%%%%%%%%%%%%
\section{Introduction}\label{sec:intro}
%%%%%%%%%%%%%%%%%%%%%%%%%%%%%%%%%%%%%%%%%%%%%%%%%%%%%%%%%%%%%%%%%%%%%%%%%%%%%%%%%%%
%%%%%%%%%%%%%%%%%%%%%%%%%%%%%%%%%%%%%%%%%%%%%%%%%%%%%%%%%%%%%%%%%%%%%%%%%%%%%%%%%%%

%%%%%%%%%%%%%%%%%%%%%%%%%%%%%%%%%%%%%%%%%%%%%%%%%%%%%%%%%%%%%%%%%%%%%%%%%%%%%%%%%%%
%\subsection{LLG equation \& weak-strong uniqueness}\label{subsec:LLG}
%%%%%%%%%%%%%%%%%%%%%%%%%%%%%%%%%%%%%%%%%%%%%%%%%%%%%%%%%%%%%%%%%%%%%%%%%%%%%%%%%%%

The Landau--Lifshitz--Gilbert (LLG) equation is the well-accepted PDE model to describe magnetization dynamics of a ferromagnetic body $\Omega \subset \R^3$ in terms of the sought magnetization $\m : \Omega \to \S^2 := \set{x \in \R^3}{|x| = 1}$.
The time-dependent LLG equation poses several challenges like nonlinearities, a nonconvex pointwise constraint, an intrinsic energy law, which resembles the one of a gradient flow and combines conservative and dissipative effects, and the possible coupling with other PDEs, e.g., the Maxwell equations. 
On the one hand, weak solutions exist globally in time, but may be nonunique; see~\cite{AS92} for the seminal contribution in this direction. On the other hand, strong solutions to LLG exist only locally in time and under severe assumptions on the initial condition (see \cite{cf01a,cf01b,melcher12, ft17}; see also \cite{prohl2001computational}) .

One common question for solutions of PDEs is whether existing smooth and weak solutions coincide, rather than coexist.
Such weak-strong uniqueness results are ubiquitous in the literature for various PDE models, e.g., for the Navier--Stokes equations \cite{constantin89, Op18FM}. For LLG, however, weak-strong uniqueness has only been investigated recently in~\cite{DS14}. In the latter paper, the analysis focuses on a simplified setting with $\Omega = \R^3$ (i.e., possible boundary conditions are neglected) and the so-called effective field, which drives the evolution of $\m$, consists only of the leading-order exchange contribution. In particular, physically relevant lower-order terms like Zeeman field, anisotropy, stray field, and Dzyaloshinskii--Moriya interaction are excluded. The very recent preprint~\cite{kks19} considers weak-strong uniqueness for a more involved magnetoviscoelastic model, but restricts to a simplified setting for thin magnetic films, where $\Omega \subset \R^2$.

The purpose of the present paper is twofold: First, we prove weak-strong uniqueness for some physical relevant 3D setting of LLG, where $\Omega \subset \R^3$ is the bounded domain of the ferromagnet and where we account for all standard lower-order contributions to the effective field.
Second, the analysis of existing weak-strong uniqueness results in~\cite{DS14,kks19} involves much tedious algebra.
While our concept of proof is closely related to that of the seminal work~\cite{DS14}, we believe that our proof is more concise. In particular, our proof gives a template on how to approach weak-strong uniqueness for even more complicated problems, where LLG is (nonlinearly) coupled to other (nonlinear) PDE systems.

We note that weak-strong uniqueness results are of particular interest for the numerical integration of LLG systems: Available unconditionally convergent integrators converge (weakly on subsequences of the computed discrete solutions) towards a weak solution of LLG; see, e.g.,~\cite{MR2379897,MR2257110} for some seminal works on plain LLG or~\cite{MR3955721,MR3253344,tps2} for some coupled LLG systems. Weak-strong uniqueness thus implies that all these integrators will converge towards the same limit (even for the full sequence of computed solutions), at least as long as a strong solution exists.

\bigskip

%%%%%%%%%%%%%%%%%%%%%%%%%%%%%%%%%%%%%%%%%%%%%%%%%%%%%%%%%%%%%%%%%%%%%%%%%%%%%%%%%%%
%\subsection{Outline}\label{subsec:outline}
%%%%%%%%%%%%%%%%%%%%%%%%%%%%%%%%%%%%%%%%%%%%%%%%%%%%%%%%%%%%%%%%%%%%%%%%%%%%%%%%%%%
{\bf Outline.}
In Section~\ref{sec:result}, we give a thorough statement of LLG, formulate the notion of strong and weak solutions, and state our main result (Theorem~\ref{theorem:main}) on weak-strong uniqueness for LLG on a bounded Lipschitz domain $\Omega \subset \R^3$.
In Section~\ref{sec:auxiliary}, we reformulate  the LLG equation in terms of a helicity functional, which combines exchange energy and Dzyaloshinskii--Moriya interaction. The original idea for this is well-known in topological fluid dynamics~\cite{moffatt14} and the theory of liquid crystals~\cite{hkl86}. Its use in micromagnetics goes back to~\cite{melcher14}.
We extend his ideas and introduce a helicity calculus, which will prove useful in our analysis.
In Section~\ref{sec:strong}, we give an elementary proof for the uniqueness of strong solutions of LLG (Theorem~\ref{theorem:strong-strong}), which strongly relies on an energy argument for the dissipation of the micromagnetic bulk energy. In Section~\ref{sec:proof}, we show how to transfer the steps of the strong-strong uniqueness proof to obtain the weak-strong uniqueness result of Theorem~\ref{theorem:main}.

\bigskip

{\bf General notation.}
Throughout the paper, for subsets $X \subset \R^{d_1}$ and $Y \subset \R^{d_2}$, we denote by $\skp{\cdot}{\cdot}{X}$ the $L^2(X;Y)$ scalar product and by $\norm{\cdot}{X}$ the corresponding norm, where we assume that the set $Y$ is clear from the context.

%\clearpage
%!TEX root = weak-strong.tex

%%%%%%%%%%%%%%%%%%%%%%%%%%%%%%%%%%%%%%%%%%%%%%%%%%%%%%%%%%%%%%%%%%%%%%%%%%%%%%%%%%%
%%%%%%%%%%%%%%%%%%%%%%%%%%%%%%%%%%%%%%%%%%%%%%%%%%%%%%%%%%%%%%%%%%%%%%%%%%%%%%%%%%%
\section{Main Result}\label{sec:result}
%%%%%%%%%%%%%%%%%%%%%%%%%%%%%%%%%%%%%%%%%%%%%%%%%%%%%%%%%%%%%%%%%%%%%%%%%%%%%%%%%%%
%%%%%%%%%%%%%%%%%%%%%%%%%%%%%%%%%%%%%%%%%%%%%%%%%%%%%%%%%%%%%%%%%%%%%%%%%%%%%%%%%%%

%%%%%%%%%%%%%%%%%%%%%%%%%%%%%%%%%%%%%%%%%%%%%%%%%%%%%%%%%%%%%%%%%%%%%%%%%%%%%%%%%%%
\subsection{LLG equation}\label{subsec:llg}
%%%%%%%%%%%%%%%%%%%%%%%%%%%%%%%%%%%%%%%%%%%%%%%%%%%%%%%%%%%%%%%%%%%%%%%%%%%%%%%%%%%
In this paper, we consider the micromagnetic energy functional
\begin{equation}
\label{eq:result:llg-energy}
\begin{split}
	&\EE[\m, \vec{f}]
	:=
	\EE_\mathrm{ex}[\m] + \EE_\mathrm{DMI}[\m] + \EE_\mathrm{lo}[\m] +\EE_\mathrm{appl}[\m, \vec{f}] \\
	& \qquad :=
	\frac{\lex^2}{2} \int_\Omega |\nabla \m|^2 \d{x}
	+ \frac{\kappa}{2} \int_\Omega \m \cdot \curl \m \d{x}
	- \frac{1}{2} \int_\Omega \m \cdot \boldsymbol{\pi}(\m) \d{x}
	- \int_\Omega \m \cdot \vec{f} \d{x}.
\end{split}
\end{equation}
Here, $\EE_\mathrm{ex}$ is the exchange energy with exchange length $\lex > 0$, and $\EE_\mathrm{DMI}$ is the energy contribution associated with the Dzyaloshinskii--Moriya interaction with strength $\kappa \in \R$.
In $\EE_\mathrm{lo}$, we summarize lower-order contributions like anisotropy and stray field energy.
The operator $\boldsymbol{\pi}: L^2(\Omega;\R^3) \to L^2(\Omega;\R^3)$ is required to be linear, bounded, and self-adjoint, and to preserve certain regularity in the sense that
\begin{equation}\label{eq:pi-regularity}
	\norm{\pi(\u)}{L^\infty(0,T; C^1(\overline{\Omega}))}
	<
	\infty
	\quad \text{for all } \u \in C^3(\overline{\Omega} \times [0,T]; \R^3).
\end{equation}
Finally, $\EE_\mathrm{appl}$ describes the energy contribution due to a given time-dependent external field $\vec{f} \in C^1(\overline{\Omega} \times [0,\infty); \R^3)$.

Direct computation provides the functional derivative of the energy functional,
\begin{equation}
\label{eq:result:energy-derivative}
	\partial_{\m} \EE [\m, \vec{f}](\vec{\varphi})
	=
	\int_\Omega \Big( \lex \nabla \m : \nabla \vec{\varphi}
	+ \frac{\kappa}{2} \m \cdot \curl \vec{\varphi} + \frac{\kappa}{2} \vec{\varphi} \cdot \curl \m
	- \boldsymbol{\pi}(\m) \cdot \vec{\varphi}
	- \vec{f} \cdot \vec{\varphi} \Big)
	\d{x}.
\end{equation}
For $\m$ smooth enough and satisfying suitable boundary conditions (cf.\ \eqref{eq:result:strong-bc} below), integration by parts yields the effective field
\begin{equation*}
	\heff(\m)
	:=
	- \partial_{\m} \EE [\m, \vec{f}]
	=
	\lex^2 \Delta \m - \kappa \curl \m + \boldsymbol{\pi}(\m) + \vec{f}.
\end{equation*}
Together with natural boundary conditions of the energy \eqref{eq:result:llg-energy}, the strong form of LLG reads
\begin{subequations}
\label{eq:result:llg-strong-system}
	\begin{align}
	\label{eq:result:llg-strong}
		\partial_t \m
		=
		\alpha \m \times \partial_t \m - \m \times \heff(\m)
		& \quad \text{in } \Omega \times [0,T),\\
	\label{eq:result:strong-bc}
		\lex \partial_{\normalv} \m + \frac{\kappa}{\lex} \m \times \normalv = \vec{0}
		& \quad \text{on } \partial \Omega \times [0,T) , \\
		\m(0,\cdot) = \m_0
		& \quad \text{in } \Omega,
	\end{align}
\end{subequations}
where $\normalv$ is the outwards facing unit normal vector on $\partial \Omega$. 
%and $\m_0 \in H^1(\Omega; \S^2)$ satisfies the above boundary condition \eqref{eq:result:strong-bc} in order to allow for an integration by parts in space.
For solutions to \eqref{eq:result:llg-strong-system} the energy is conserved up to some \emph{dissipation terms}
\begin{equation}
\label{eq:def-disspative-terms}
	\DD[\m, \vec{f}](t)
	:=
	\int_{0}^{t} \!\!\! \alpha \norm{\partial_t \m}{\Omega}^2 \d{t}
	+ \int_{0}^{t} \!\!\! \skp{\partial_t \vec{f}}{\m}{\Omega} \d{t},
\end{equation}
i.e., there holds the \emph{energy equality}
\begin{equation}
\label{eq:strong-energy-equality}
	\EE[\m(t), \vec{f}(t)] + \DD[\m, \vec{f}](t)
	=
	\EE[\m_0, \vec{f}(0)]
	\quad \text{for all }
	t \in (0,T).
\end{equation}

\begin{remark}
	Formally, to derive the energy equality~\eqref{eq:strong-energy-equality}, one multiplies by $\alpha \partial_t \m - \heff(\m)$ equation \eqref{eq:result:llg-strong}, hence obtaining the conservation law
	\begin{equation}
	\label{eq:result:alternate-energy-inequality}
		\partial_t \m \cdot \big(\alpha \partial_t \m - \heff(\m)\big) = 0 ,
	\end{equation}
	pointwise in space and time.
	Then, \eqref{eq:strong-energy-equality} follows from integration over space and time and integration by parts in space.
	We note that \eqref{eq:result:alternate-energy-inequality} is stronger than \eqref{eq:strong-energy-equality} and will be exploited in the proof of Theorem~\ref{theorem:strong-strong}, which states uniqueness of strong solutions.
\end{remark}

Multiplying \eqref{eq:result:llg-strong} with an appropriate test function, we also obtain a weak form of LLG.
To restrict only to meaningful solutions in a physical sense, one has to incorporate a weak analogue to the energy equality~\eqref{eq:strong-energy-equality} into the definition of weak solutions.
To this end, we follow the lines of \cite{AS92}.

\begin{definition}\label{def:weak-solution}
	Given $\m_0 \in H^1(\Omega; \S^2)$, $\m \in L^\infty((0,\infty); H^1(\Omega; \S^2))$ is a global weak solution of \eqref{eq:result:llg-strong-system} if the following properties {\rm (i)--(iv)} are satisfied for almost all $T > 0$:
	\begin{enumerate}[label={\rm (\roman*)}]
		\item $\m \in H^1(\Omega \times (0,T); \S^2)$;
		
		\item $\m(0,\cdot) = \m_0$ in the sense of traces;
		
		\item for all $\vec{\varphi} \in  H^1(\Omega \times (0,T); \R^3)$ there holds
		\begin{equation}
		\label{eq:result:llg-weak}
			\int_{0}^{T} \skp{\partial_t \m}{\vec{\varphi}}{\Omega} \d{t}
			= 
			\int_{0}^{T} \alpha \skp{\partial_t \m}{\vec{\varphi} \times \m}{\Omega}
			+ \partial_{\m} \EE [\m, \vec{f}](\vec{\varphi} \times \m) \d{t};
		\end{equation}
		
		\item there holds the \emph{energy inequality}
		\begin{equation}
		\label{eq:result:energy-inequality}
			\EE[\m(T), \vec{f}(T)]
			+ \DD[\m, \vec{f}](T)
			\leq
			\EE[\m_0, \vec{f}(0)].
		\end{equation}
	\end{enumerate}
\end{definition}

%%%%%%%%%%%%%%%%%%%%%%%%%%%%%%%%%%%%%%%%%%%%%%%%%%%%%%%%%%%%%%%%%%%%%%%%%%%%%%%%%%%
\subsection{Main theorem}\label{subsec:theorem}
%%%%%%%%%%%%%%%%%%%%%%%%%%%%%%%%%%%%%%%%%%%%%%%%%%%%%%%%%%%%%%%%%%%%%%%%%%%%%%%%%%%
The main result of this paper is stated in the following theorem.

\begin{theorem}\label{theorem:main}
	Let $\m_0 \in H^1(\Omega; \S^2)$ and $T > 0$.
	Suppose that $\m_1 \in C^3(\overline{\Omega} \times [0,T])$ is a strong solution of \eqref{eq:result:llg-strong} and $\m_2$ is a global weak solution in the sense of Definition~\ref{def:weak-solution}.
	Then, it follows that
	\begin{equation*}
		\m_1 = \m_2
		\qquad
		\text{a.e.\ in } \Omega \times (0,T).
	\end{equation*}
\end{theorem}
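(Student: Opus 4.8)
The plan is to run a relative-energy (Dafermos-type) argument that mirrors the proof of Theorem~\ref{theorem:strong-strong}, using the weak energy inequality \eqref{eq:result:energy-inequality} in place of the pointwise energy balance. Write $\w := \m_2 - \m_1$ and note $\w(0) = \vec 0$, since both solutions start from $\m_0$. Because, for fixed $t$, the map $\u \mapsto \EE[\u, \vec f(t)]$ is quadratic with linear part $-\skp{\cdot}{\vec f}{\Omega}$, its second-order Taylor expansion is exact,
\[
	\EE[\m_2, \vec f] - \EE[\m_1, \vec f] - \partial_{\m} \EE[\m_1, \vec f](\w) = \tfrac12\, a(\w, \w),
\]
where $a(\cdot,\cdot)$ is the ($\vec f$-independent) symmetric bilinear form underlying $\EE$, i.e.\ the exchange and DMI contributions minus the $\boldsymbol{\pi}$-term. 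Subtracting the strong energy equality \eqref{eq:strong-energy-equality} for $\m_1$ from the weak energy inequality \eqref{eq:result:energy-inequality} for $\m_2$ and using $\w(0) = \vec 0$ yields the master inequality
\[
	\tfrac12\, a\big(\w(T),\w(T)\big) + \partial_{\m} \EE[\m_1(T),\vec f(T)](\w(T)) + \big(\DD[\m_2,\vec f](T) - \DD[\m_1,\vec f](T)\big) \le 0.
\]

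I would then assemble two ingredients: coercivity of the quadratic term and controllability of the two remaining terms. For coercivity I invoke the helicity calculus of Section~\ref{sec:auxiliary}: completing the square in the exchange-plus-DMI part writes it as a nonnegative helicity functional up to a term that is constant on $\S^2$, which together with the boundedness and self-adjointness of $\boldsymbol{\pi}$ gives a G\aa rding-type inequality $a(\w,\w) \ge c\,\norm{\nabla\w}{\Omega}^2 - C\,\norm{\w}{\Omega}^2$ with $c > 0$. Since the $L^2$-part is not directly controlled by $a$, I work with the combined quantity $\mathcal R(t) := \tfrac12\, a(\w(t),\w(t)) + \tfrac{C}{2}\,\norm{\w(t)}{\Omega}^2 \simeq \norm{\w(t)}{H^1(\Omega)}^2$ and track its $L^2$-part separately through the tested difference of the equations.

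The linear term and the dissipation difference must then be turned into a time integral of $\mathcal R$. Using $\w(0) = \vec 0$ and the boundary condition \eqref{eq:result:strong-bc} — so that $\partial_{\m}\EE[\m_1](\cdot) = -\skp{\heff(\m_1)}{\cdot}{\Omega}$ after integration by parts, with the DMI and $\boldsymbol{\pi}$ boundary terms disposed of by the helicity calculus and \eqref{eq:pi-regularity} — I rewrite $\partial_{\m}\EE[\m_1(T)](\w(T))$ as the time integral of $-\tfrac{d}{dt}\skp{\heff(\m_1)}{\w}{\Omega}$. The factor $\skp{\heff(\m_1)}{\partial_t\m_2}{\Omega}$ is not directly admissible, since \eqref{eq:result:llg-weak} only allows test functions of the form $\vec\varphi\times\m_2$; I therefore test \eqref{eq:result:llg-weak} with $\vec\varphi = \heff(\m_1)\times\m_2$ (and similar smooth choices), exploiting $|\m_2| = 1$ to recover $\heff(\m_1)$ up to terms proportional to $\w$. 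Simultaneously, the pointwise conservation law \eqref{eq:result:alternate-energy-inequality} supplies the analogous balance for the smooth solution $\m_1$, exactly as in Theorem~\ref{theorem:strong-strong}. For the dissipation difference I expand $\norm{\partial_t\m_2}{\Omega}^2 - \norm{\partial_t\m_1}{\Omega}^2 = \norm{\partial_t\w}{\Omega}^2 + 2\skp{\partial_t\m_1}{\partial_t\w}{\Omega}$: the first term has a favorable sign, and the cross term is integrated by parts in time against the smooth $\m_1$. Using the $L^\infty$-bounds on $\m_1$ and its space-time derivatives, all contributions are bounded by $\int_0^T \mathcal R\,dt$ plus a small multiple of $\norm{\nabla\w(T)}{\Omega}^2$ that is absorbed into the coercive term, giving $\mathcal R(T) \le C\int_0^T \mathcal R\,dt$; since $\mathcal R(0) = 0$, Gronwall's lemma forces $\mathcal R \equiv 0$, i.e.\ $\m_1 = \m_2$.

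The main obstacle is the regularity mismatch combined with the restricted test-function class. Since $\m_2$ solves LLG only weakly, with $\partial_t\m_2 \in L^2$ and no pointwise equation, one may neither differentiate its energy nor test with $\w$ itself; every manipulation must pass through the one-sided energy inequality and through test functions of the form $\vec\varphi\times\m_2$. The delicate point is that the time-boundary terms at $t = T$ produced by these integrations by parts, such as $\skp{\partial_t\m_1(T)}{\w(T)}{\Omega}$ and $\skp{\heff(\m_1(T))}{\w(T)}{\Omega}$, carry $O(1)$ rather than small coefficients; they must be made to cancel via the LLG relation between $\partial_t\m_1$ and $\heff(\m_1)$ before any absorption, rather than estimated crudely. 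Keeping the genuinely second-order contributions inside the coercive relative energy $\mathcal R$ — and not among the error terms — while correctly handling the DMI boundary terms and the nonlocal operator $\boldsymbol{\pi}$ through \eqref{eq:pi-regularity}, is where the bookkeeping is heaviest.
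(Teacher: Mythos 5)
Your overall architecture is the paper's: a relative-energy/Gronwall argument that mirrors Theorem~\ref{theorem:strong-strong}, combining the weak energy inequality \eqref{eq:result:energy-inequality} for $\m_2$ with the energy balance of the smooth solution $\m_1$, using the helicity calculus for coercivity of the quadratic part, integrating by parts first in space so that only $\partial_t\m_2$ and $\nabla\m_2$ are ever paired against smooth fields, and closing with Poincar\'e and Gronwall on time intervals of controlled length. The paper adds the two energy inequalities and expands $\EE_\helical[\w,\vec f]+\DD[\w,\vec f]$ symmetrically (Lemma~\ref{lemma:proof:weak-estimate}), whereas you subtract them and center on $\m_1$; this is a cosmetic difference, and your observation that the second-order Taylor expansion of the quadratic energy is exact is precisely what makes both versions work.

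There is, however, a concrete gap at the decisive step, namely the treatment of the cross term $\int_0^t\skp{\alpha\partial_t\m_1-\heff(\m_1)}{\partial_t\m_2}{\Omega}\d{t}$. You assert that the pairing $\skp{\heff(\m_1)}{\partial_t\m_2}{\Omega}$ ``is not directly admissible, since \eqref{eq:result:llg-weak} only allows test functions of the form $\vec{\varphi}\times\m_2$''; this misreads Definition~\ref{def:weak-solution}(iii): the test function $\vec{\varphi}$ is paired \emph{directly} with $\partial_t\m_2$ on the left-hand side of \eqref{eq:result:llg-weak}, and only on the right-hand side does it appear as $\vec{\varphi}\times\m_2$. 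The correct (and the paper's) choice is therefore $\vec{\varphi}=\Psi[\m_1]-\vec f=\alpha\partial_t\m_1-\heff(\m_1)$, a smooth field depending on $\m_1$ alone; the resulting right-hand side contains $\skp{\nabla_\helical\m_2}{\nabla_\helical\big((\Psi[\m_1]-\vec f)\times\m_2\big)}{\Omega}$, whose only contribution quadratic in $\nabla\m_2$, namely $\skp{\nabla_\helical\m_2}{(\Psi[\m_1]-\vec f)\times\nabla_\helical\m_2}{\Omega}$, vanishes by antisymmetry of the triple product, so that everything surviving is quadratic in $\w$ and Gronwall-compatible. Your workaround $\vec{\varphi}=\heff(\m_1)\times\m_2$ destroys this cancellation: since $(\heff(\m_1)\times\m_2)\times\m_2=(\m_2\cdot\heff(\m_1))\,\m_2-\heff(\m_1)$, the extra field $(\m_2\cdot\heff(\m_1))\,\m_2$ is $O(1)$ rather than $O(\w)$, and feeding it into $\partial_{\m}\EE[\m_2,\vec f](\cdot)$ produces a term proportional to $\int_\Omega(\m_2\cdot\heff(\m_1))\,|\nabla\m_2|^2\d{x}$, which is merely $L^1$ in space, is not quadratic in $\w$, and cannot be absorbed into the relative energy. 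Once this step is replaced by the direct test function (and, if you wish to argue at the level of $\Psi[\m_2]$ rather than via the Leibniz rule for $\nabla_\helical$, by the mollification $\m_2^\varepsilon\to\m_2$ used in Section~\ref{sec:proof}), the rest of your outline goes through as in the paper.
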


\begin{remark}
%We use the fact that $\m_0$ satisfies the boundary conditions in equation~\eqref{eq:proof:estimate-helical} below.
%In \cite{DS14}, a similar computation is made without integration by parts in space (and hence avoidance of boundary conditions for $\m_0$). 
%However, the proof of~\cite{DS14} requires that $\partial_t \skp{\nabla \m_1}{\nabla \m_2}{\Omega}$ exists, which is unclear and avoided by the present argument.
%
The regularity assumptions for the strong solution are used, e.g., in~\eqref{eq:strong:expanded-estimate}, where the hidden $\nabla_\helical \Delta_\helical \m_1$ requires $C^3$ regularity in the interior. Moreover, the embedding theorems used in the proof of Lemma~\ref{lemma:Linfty-bounds} require $C^3$ regularity also up to the boundary, resulting in $\m_1 \in C^3(\overline{\Omega} \times [0,T])$.

Note that $\m_1 \in C^3(\overline{\Omega} \times [0,T])$ implies a higher regularity of $\m_0$ than assumed.
	In contrast to elliptic regularity theory, there is no known result that guarantees regularity of solutions based on smoothness of the initial condition alone.
	For the LLG equation, such results are only known with additional assumptions to $\m_0$ (see \cite{cf01a,cf01b,melcher12, ft17}) which, in our case, need not be fulfilled.
	Overall, weak-strong uniqueness is a non-trivial observation.
\end{remark}

\begin{remark}
	For the ease of presentation, we restrict to linear and self-adjoint lower-order terms $\boldsymbol{\pi}(\cdot)$.
	However, with slight modifications, general lower-order terms with pointwise nonlinearities can also be included if they are Lipschitz continuous and satisfy~\eqref{eq:pi-regularity}.
	This covers, for instance, usual anisotropy contributions, where the anisotropy density is a (pointwise) polynomial of the magnetization $\m$.
\end{remark}

%\clearpage
%!TEX root = weak-strong.tex

%%%%%%%%%%%%%%%%%%%%%%%%%%%%%%%%%%%%%%%%%%%%%%%%%%%%%%%%%%%%%%%%%%%%%%%%%%%%%%%%%%%
%%%%%%%%%%%%%%%%%%%%%%%%%%%%%%%%%%%%%%%%%%%%%%%%%%%%%%%%%%%%%%%%%%%%%%%%%%%%%%%%%%%
\section{Preliminaries}\label{sec:auxiliary}
%%%%%%%%%%%%%%%%%%%%%%%%%%%%%%%%%%%%%%%%%%%%%%%%%%%%%%%%%%%%%%%%%%%%%%%%%%%%%%%%%%%
%%%%%%%%%%%%%%%%%%%%%%%%%%%%%%%%%%%%%%%%%%%%%%%%%%%%%%%%%%%%%%%%%%%%%%%%%%%%%%%%%%%

We introduce some auxiliary results which will be used in the following sections.
First, we recall the so-called \emph{helical derivative} from \cite{melcher14}.
While it was originally only used to summarize exchange and Dzyaloshinskii--Moriya energy in a positive energy term (Lemma~\ref{lemma:helical-representation}), we introduce here a \emph{helicity calculus}, which makes our proofs clearer.

%%%%%%%%%%%%%%%%%%%%%%%%%%%%%%%%%%%%%%%%%%%%%%%%%%%%%%%%%%%%%%%%%%%%%%%%%%%%%%%%%%%
\subsection{Helical derivative}\label{subsec:helical}
%%%%%%%%%%%%%%%%%%%%%%%%%%%%%%%%%%%%%%%%%%%%%%%%%%%%%%%%%%%%%%%%%%%%%%%%%%%%%%%%%%%
For $i = 1,2,3$ and $\u \in H^1(\Omega; \R^3)$, define the \emph{partial helical derivative}, the \emph{helical gradient}, and the \emph{helical Laplacian} as
\begin{equation}
\label{eq:def-helical-derivative}
	\partial^\helical_i \u
	:=
	\lex \partial_i \u + \frac{\kappa}{\lex} (\u \! \times \! \vec{e}_i),
	\quad
	\nabla_\helical \u
	:=
	(\partial^\helical_1 \u, \partial^\helical_2 \u, \partial^\helical_3 \u),
	\quad
	\Delta_\helical \u
	:=
	\sum_{i=1}^{3} \partial^\helical_i \partial^\helical_i \u,
\end{equation}
respectively, where $\vec{e}_i \in \R^3$ is the $i$-th coordinate vector.
In the following, for a matrix $\vec{M} \in \R^{3 \times 3}$ and a (column) vector $\u \in \R^3$, the cross products $\vec{M} \times \u \in \R^{3 \times 3}$ and $\u \times \vec{M} \in \R^{3 \times 3}$ are understood to act column-wise.
The following lemma collects some rules for the helical derivative.

\begin{lemma}\label{lemma:helical-properties}
	The partial helical derivatives $\partial_i^\helical$ are linear operators $H^1(\Omega;\R^3) \to L^2(\Omega;\R^3)$.
	Furthermore, the helical gradient has the following properties {\rm (i)}--{\rm (iii)}.
	\begin{enumerate}[label={\rm (\roman*)}]
		\item For $\u_1 \in H^1(\Omega;\R^3)$ and $\u_2 \in C^2(\overline{\Omega};\R^3)$,
		there holds the integration by parts formula
		\begin{equation}
		\label{eq:auxiliary:helical-ibp}
		\skp{\nabla_\helical \u_1}{\nabla_\helical \u_2}{\Omega}
		=
		\skp{\u_1}{\nabla_\helical \u_2 \cdot \normalv}{\partial \Omega}
		- \skp{\u_1}{\Delta_\helical \u_2}{\Omega},
		\end{equation}
		where $\normalv$ is the outwards facing unit normal vector on $\partial \Omega$.
		\item For $\u \in C^2(\Omega \times (0,T); \R^3)$, it holds that
		\begin{equation*}
			\partial_t \nabla_\helical \u
			=
			\nabla_\helical \partial_t \u.
		\end{equation*}
		
		\item For $\u_1, \u_2 \in H^1(\Omega;\R^3)$,
		there holds the Leibniz rule
		\begin{equation}
		\label{eq:auxiliary:chainrule}
			\nabla_\helical (\u_1 \times \u_2)
			=
			\nabla_\helical \u_1 \times \u_2 + \u_1 \times \nabla_\helical \u_2.
		\end{equation}
	\end{enumerate}
\end{lemma}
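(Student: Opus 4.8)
The plan is to verify each claim by direct computation, since every part reduces to a standard calculus rule combined with the bilinearity and skew-symmetry of the cross product. Linearity of $\partial_i^\helical$ is immediate: the map $\u\mapsto\lex\partial_i\u$ is linear, and $\u\mapsto\frac{\kappa}{\lex}(\u\times\vec{e}_i)$ is linear because $\vec{e}_i$ is fixed while the cross product is bilinear. Boundedness as an operator $H^1(\Omega;\R^3)\to L^2(\Omega;\R^3)$ follows since $\partial_i\colon H^1\to L^2$ is bounded and $\u\mapsto\u\times\vec{e}_i$ is bounded on $L^2$ (pointwise $|\u\times\vec{e}_i|\le|\u|$).

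For the integration by parts formula~(i), I would first establish the elementary adjoint relation for a single partial helical derivative: for $\u_1\in H^1(\Omega;\R^3)$ and smooth $\w$,
\[
\skp{\partial_i^\helical\u_1}{\w}{\Omega}
=
\lex\skp{\u_1}{\w\,n_i}{\partial\Omega}
-\skp{\u_1}{\partial_i^\helical\w}{\Omega}.
\]
This is obtained by integrating by parts the Euclidean part $\lex\partial_i\u_1$, which produces the boundary flux, and by using the pointwise identity $(\u_1\times\vec{e}_i)\cdot\w=-\u_1\cdot(\w\times\vec{e}_i)$ for the Dzyaloshinskii--Moriya part, which contributes no boundary term; crucially, the two resulting volume integrals do not stay separated but recombine exactly into $-\skp{\u_1}{\partial_i^\helical\w}{\Omega}$. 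Choosing $\w=\partial_i^\helical\u_2$ and summing over $i=1,2,3$ then gives~\eqref{eq:auxiliary:helical-ibp}: the boundary fluxes collect into the helical normal trace $\skp{\u_1}{\nabla_\helical\u_2\cdot\normalv}{\partial\Omega}$, while the volume terms collect into $-\skp{\u_1}{\Delta_\helical\u_2}{\Omega}$ by the very definition of $\Delta_\helical$.

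The commutation rule~(ii) is immediate for $\u\in C^2$: the time derivative commutes with $\partial_i$ by Schwarz's theorem and with the constant-coefficient linear map $\u\mapsto\u\times\vec{e}_i$, whence $\partial_t\partial_i^\helical\u=\partial_i^\helical\partial_t\u$ for each $i$. For the Leibniz rule~(iii), I would split $\partial_i^\helical(\u_1\times\u_2)$ into its Euclidean and Dzyaloshinskii--Moriya parts: the ordinary product rule handles $\lex\partial_i(\u_1\times\u_2)=\lex(\partial_i\u_1\times\u_2)+\lex(\u_1\times\partial_i\u_2)$, and it only remains to verify the purely algebraic identity
\[
(\u_1\times\u_2)\times\vec{e}_i
=
(\u_1\times\vec{e}_i)\times\u_2+\u_1\times(\u_2\times\vec{e}_i),
\]
which follows from two applications of the vector triple-product expansion. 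Regrouping the two parts yields $\nabla_\helical\u_1\times\u_2+\u_1\times\nabla_\helical\u_2$ columnwise.

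The only step demanding genuine care is~(i): one must track that the exchange part of $\partial_i^\helical$ is symmetric up to a boundary flux while the Dzyaloshinskii--Moriya part is skew and boundary-free, and then recognize that the leftover volume integrals reassemble into a \emph{second} helical derivative, producing the helical Laplacian rather than the Euclidean one. Parts~(ii) and~(iii) are routine once the algebraic cross-product identity is in hand.
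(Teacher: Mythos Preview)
Your proposal is correct and follows exactly the approach the paper indicates: the paper's own proof consists of the single sentence ``The claims follow from explicit computation and the corresponding identities for the partial derivatives $\partial_t$ and $\partial_i$ for $i=1,2,3$,'' and you have carried out precisely that explicit computation, including the key observation for~(i) that the cross-product part of $\partial_i^\helical$ is skew (hence boundary-free) while the $\lex\partial_i$ part produces the flux, so that the volume terms reassemble into $\partial_i^\helical$ applied once more. Your treatment of~(iii) via the Jacobi-type identity $(\u_1\times\u_2)\times\vec{e}_i=(\u_1\times\vec{e}_i)\times\u_2+\u_1\times(\u_2\times\vec{e}_i)$ is the right algebraic ingredient, and~(ii) is indeed immediate.
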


\begin{proof}
	The claims follow from explicit computation and the corresponding identities for the partial derivatives $\partial_t$ and $\partial_i$ for $i=1,2,3$.
\end{proof}

The following lemma from~\cite{melcher14} states that, for magnetization fields, the energy of the helical derivative is the sum of exchange energy $\EE_\mathrm{ex}$ and Dzyaloshinskii--Moriya energy $\EE_\mathrm{DMI}$ plus some constant.
For the convenience of the reader, we give a short proof here.
\begin{lemma}\label{lemma:helical-representation}
	Let $\u \in H^1(\Omega; \S^2)$.
	Then, it holds that
	\begin{equation}
	\label{eq:auxiliary:helical-representation}
		\frac{1}{2} \int_\Omega  \abs{\nabla_\helical \u}^2 \d{x}
		=
		\int_\Omega \bigg(
		\frac{\lex^2}{2} \abs{\nabla \u}^2 + \kappa \, \u \cdot \curl \u + \frac{\kappa^2}{\lex^2}
		\bigg) \d{x}.
	\end{equation}
\end{lemma}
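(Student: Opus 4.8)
The plan is to establish the identity pointwise and then simply integrate over $\Omega$, so the whole argument reduces to elementary vector algebra together with the sphere constraint $\abs{\u} = 1$; in particular, no integration by parts is needed, which is why $\u \in H^1(\Omega;\S^2)$ already suffices. First I would expand the integrand on the left-hand side using the definition~\eqref{eq:def-helical-derivative} of the partial helical derivative. For each fixed $i$ one obtains
\begin{equation*}
\abs{\partial^\helical_i \u}^2
=
\lex^2 \abs{\partial_i \u}^2
+ 2\kappa \, \partial_i \u \cdot (\u \times \vec{e}_i)
+ \frac{\kappa^2}{\lex^2} \abs{\u \times \vec{e}_i}^2 ,
\end{equation*}
so that summing over $i = 1,2,3$ splits $\abs{\nabla_\helical \u}^2$ into three contributions to be treated separately.

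The first contribution is immediate, since $\sum_i \lex^2 \abs{\partial_i \u}^2 = \lex^2 \abs{\nabla \u}^2$ gives the exchange term. For the third contribution I would use $\abs{\u \times \vec{e}_i}^2 = \abs{\u}^2 - (\u \cdot \vec{e}_i)^2 = \abs{\u}^2 - u_i^2$ and sum to get $\sum_i \abs{\u \times \vec{e}_i}^2 = 3\abs{\u}^2 - \abs{\u}^2 = 2\abs{\u}^2$. This is the only place where the constraint enters: because $\u \in \S^2$ means $\abs{\u}^2 = 1$ pointwise, this contribution becomes the constant $2\kappa^2/\lex^2$, which yields $\kappa^2/\lex^2$ after dividing by two.

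The heart of the computation, and the step I expect to be the main (if mild) obstacle, is the cross term. I would rewrite it with the Levi-Civita symbol: from $(\u \times \vec{e}_i)_m = \sum_n \epsilon_{mni} u_n$ one finds $\sum_i \partial_i \u \cdot (\u \times \vec{e}_i) = \sum_{i,m,n} \epsilon_{mni}\, u_n\, \partial_i u_m$, and a cyclic relabelling $(m,n,i) \mapsto (l,j,k)$ identifies this with $\sum_{j,k,l} \epsilon_{jkl}\, u_j\, \partial_k u_l = \u \cdot \curl \u$. The only care needed is to track the sign of the permutation, but since the relabelling is a cyclic shift the symbol $\epsilon$ is unchanged, so the cross term is exactly $2\kappa\,\u \cdot \curl \u$. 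Collecting the three pieces gives $\tfrac12 \abs{\nabla_\helical \u}^2 = \tfrac{\lex^2}{2}\abs{\nabla \u}^2 + \kappa\,\u \cdot \curl \u + \kappa^2/\lex^2$ pointwise, and integrating over $\Omega$ yields exactly~\eqref{eq:auxiliary:helical-representation}.
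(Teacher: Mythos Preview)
Your argument is correct and essentially identical to the paper's proof: both expand $\abs{\partial^\helical_i \u}^2$, identify the first sum as $\lex^2\abs{\nabla\u}^2$, use the sphere constraint $\abs{\u}=1$ to reduce $\sum_i\abs{\u\times\vec{e}_i}^2$ to the constant $2$, and show that the cross term equals $\u\cdot\curl\u$. The only cosmetic difference is that the paper handles the cross term via the cyclicity of the scalar triple product together with $\curl\u=\sum_i \vec{e}_i\times\partial_i\u$, while you do the equivalent computation in index notation with the Levi--Civita symbol.
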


\begin{proof}
	Expanding the left-hand side, we see that
	\begin{align*}
		\int_\Omega \abs{\nabla_\helical \u}^2 \d{x}
		&=
		\sum_{i=1}^3 \int_\Omega \abs{\partial^\helical_i \u}^2 \d{x}\\
		&=
		\sum_{i=1}^3 \int_\Omega 
		\Big( \lex^2 \abs{\partial_i \u}^2 + 2 \kappa \, \partial_i \u \cdot (\u \times \vec{e}_i) + \frac{\kappa^2}{\lex^2} \abs{\u \times \vec{e}_i}^2 \Big) \d{x}.
	\end{align*}
	The first term on the right-hand side clearly yields $\lex^2 \abs{\nabla \u}^2$.
	For the second term, we use the identity $\curl \u = \sum_{i=1}^3 \vec{e}_i \times \partial_i \u$.
	Using the properties of the triple product, we infer that
	\begin{equation*}
		\sum_{i=1}^3 \partial_i \u \cdot (\u \times \vec{e}_i)
		=
		\u \cdot \curl \u.
	\end{equation*}
	For the last term, we get with $\abs{\u} = 1$ a.e. in $\Omega$ that
	\begin{equation*}
		\sum_{i=1}^3 \abs{\u \times \vec{e}_i}^2
		= 
		\sum_{i=1}^3 \big( \abs{\u}^2 \abs{\vec{e}_i}^2 - \abs{\u \cdot \vec{e}_i}^2 \big)
		=
		\sum_{i=1}^3 (1-\u_i^2) = 3-1 = 2,
	\end{equation*}
	where the first equality follows from the Lagrange identity
	\begin{equation*}
		(\vec{a} \times \vec{b}) \cdot (\vec{c} \times \vec{d})
		=
		(\vec{a} \cdot \vec{c}) (\vec{b} \cdot \vec{d})
		- (\vec{a} \cdot \vec{d}) (\vec{b} \cdot \vec{c}).
	\end{equation*}
	Finally, combining the above results shows the assertion.
\end{proof}

%%%%%%%%%%%%%%%%%%%%%%%%%%%%%%%%%%%%%%%%%%%%%%%%%%%%%%%%%%%%%%%%%%%%%%%%%%%%%%%%%%%
\subsection{Reformulation of LLG}\label{subsec:helicalLLG}
%%%%%%%%%%%%%%%%%%%%%%%%%%%%%%%%%%%%%%%%%%%%%%%%%%%%%%%%%%%%%%%%%%%%%%%%%%%%%%%%%%%
With the aid of Lemma~\ref{lemma:helical-representation}, we can define a shifted energy functional
\begin{equation}
\label{eq:def-helical-energy}
	\EE_\helical[\m, \vec{f}]
	:=
	\frac{1}{2} \int_\Omega |\nabla_\helical \m|^2 \d{x}
	- \frac{1}{2} \int_\Omega \m \cdot \boldsymbol{\pi}(\m) \d{x}
	- \int_\Omega \m \cdot \vec{f} \d{x}
	\stackrel{\eqref{eq:auxiliary:helical-representation}}{=}
	\EE[\m, \vec{f}] + \frac{\kappa^2}{\lex^2} |\Omega|.
\end{equation}
From the constant shift of $\EE$ with respect to $\EE_\helical$, we infer that their functional derivatives are the same.
Together with the boundary condition~\eqref{eq:result:strong-bc} (see also~\eqref{eq:helical-boundary-condition} below), the integration by parts rule~\eqref{eq:auxiliary:helical-ibp} allows to determine the functional derivative of the energy contribution $\frac{1}{2} \int_\Omega \abs{\nabla_\helical \m}^2 \d{x}$, which is $-\Delta_\helical \m$.
Thus, the strong form of LLG~\eqref{eq:result:llg-strong-system} can equivalently be reformulated in terms of the helical derivative:

\begin{subequations}
\begin{align}
\label{eq:llg-strong-helical}
	\partial_t \m
	&=
	\alpha \m \times \partial_t \m
	- \m \times \big( \Delta_\helical \m + \boldsymbol{\pi}(\m) + \vec{f} \big),
	&& \text{in} ~ \Omega \times [0,T),\\
\label{eq:helical-boundary-condition}
	\nabla_\helical \m \cdot \normalv
	&=
	\vec{0},
	&& \text{on} ~ \partial \Omega \times [0,T),\\
\label{eq:helical-initial-condition}
	\m(0, \cdot)
	&=
	\m_0,
	&& \text{in} ~ \Omega.
\end{align}
\end{subequations}
Since $\EE_\helical$ and $\EE$ differ only by a constant, the weak formulation of the previous equation coincides with the weak formulation \eqref{eq:result:llg-weak} and reads
\begin{equation}
\label{eq:llg-weak-helical}
	\int_{0}^{T} \skp{\partial_t \m}{\vec{\varphi}}{\Omega} \d{t}
	= 
	\int_{0}^{T} \alpha \skp{\partial_t \m}{\vec{\varphi} \times \m}{\Omega}
	+ \partial_{\m} \EE_\helical [\m, \vec{f}](\vec{\varphi} \times \m) \d{t}
\end{equation}
for all $\vec{\varphi} \in H^1(\Omega \times (0,T); \R^3)$.
The energy inequality~\eqref{eq:result:energy-inequality} for $\EE_\helical$ reads
\begin{equation}
\label{eq:energy-inequality-helical}
	\EE_\helical[\m(T), \vec{f}(T)]
	+ \DD[\m, \vec{f}](T)
	\leq
	\EE_\helical[\m_0, \vec{f}(0)]
	\quad \text{for almost all }
	T > 0.
\end{equation}

For the sake of conciseness, we introduce the notation
\begin{equation}
\begin{split}
\label{eq:psi-operators}
	\Psi[\u] &:= \alpha \partial_t \u - \Delta_\helical \u - \boldsymbol{\pi}(\u), \\
	\psi[\u_1, \u_2]
	&:=
	\alpha \skp{\partial_t \u_1}{\u_2}{\Omega}
	+ \skp{\nabla_\helical \u_1}{\nabla_\helical \u_2}{\Omega}
	- \skp{\boldsymbol{\pi}(\u_1)}{\u_2}{\Omega}.
\end{split}
\end{equation}
With this, we can rewrite the strong form~\eqref{eq:llg-strong-helical} and the weak form~\eqref{eq:llg-weak-helical} as
\begin{align}
\label{eq:strong-reformulation}
	\partial_t \m
	&=
	\m \times  \big( \Psi[\m] -  \vec{f} \big), \\
\label{eq:weak-reformulation}
	\int_{0}^{T} \skp{\partial_t \m}{\vec{\varphi}}{\Omega} \d{t}
	&= 
	\int_{0}^{T} \psi[\m, \vec{\varphi} \times \m]
	- \skp{\vec{f}}{\vec{\varphi} \times \m}{\Omega} \d{t},
\end{align}
respectively.
For functions $\u_1$ that are smooth enough such that $\Psi[\u_1]$ is defined, the integration by parts formula~\eqref{eq:auxiliary:helical-ibp} shows that
\begin{equation}
\label{eq:proof:Kcorrespondence}
	\psi[\u_1, \u_2]
	=
	\skp{\Psi[\u_1]}{\u_2}{\Omega}
	+ \skp{\nabla_\helical \u_1 \cdot \normalv}{\u_2}{\partial \Omega}.
\end{equation}
For strong solutions $\m$ of LLG, it follows that $\psi[\m, \vec{\varphi} \times \m] = \skp{\Psi[\m]}{\vec{\varphi} \times \m}{\Omega}$.

%%%%%%%%%%%%%%%%%%%%%%%%%%%%%%%%%%%%%%%%%%%%%%%%%%%%%%%%%%%%%%%%%%%%%%%%%%%%%%%%%%%
\subsection{Smoothness of lower-order terms}\label{subsec:lo-terms}
%%%%%%%%%%%%%%%%%%%%%%%%%%%%%%%%%%%%%%%%%%%%%%%%%%%%%%%%%%%%%%%%%%%%%%%%%%%%%%%%%%%
In the proof of Theorem~\ref{theorem:main}, we need to bound the linear lower-order terms $\boldsymbol{\pi}(\cdot)$ in certain norms.
This will be done with the help of the assumptions made in Section~\ref{subsec:llg}, in particular, $L^2$ continuity and~\eqref{eq:pi-regularity}.
We note that these assumptions are not too restrictive in the sense that (at least) they are satisfied by the most relevant lower-order terms, namely uniaxial anisotropy and stray field:
The uniaxial anisotropy density reads $\varphi(\m) = 1 - (\m \cdot \vec{e})^2$ with $\vec{e} \in \S^2$ being the so-called easy axis.
Hence, its contribution to $\boldsymbol{\pi}(\cdot)$ reads $\boldsymbol{\pi}_\textrm{aniso}(\m) = 2(\m \cdot \vec{e}) \vec{e}$ and satisfies all assumptions made.
As far as the stray field is concerned, $L^2$ continuity is well known in the literature \cite{praetorius04, fmrs19}, while the validity of~\eqref{eq:pi-regularity} is less obvious but shown in the following lemma.

\begin{lemma}\label{lemma:Linfty-bounds}
	Let $\u \in C^3(\overline{\Omega} \times [0,T]; \R^3)$ and $\theta \in (0,1)$.
	Let further $\boldsymbol{h}_s$ be the stray field contribution to the lower order terms $\boldsymbol{\pi}$.
	Then, $\boldsymbol{h}_s(\u) \in C^{2,\theta}(\overline{\Omega} \times [0,T]; \R^3)$ and
	\begin{equation}
	\label{eq:pi-bounded}
		\norm{\boldsymbol{h}_s(\u)}{C^{2, \theta}(\overline{\Omega} \times [0,T]; \R^3)}
		\leq
		C \norm{\u}{C^3(\overline{\Omega} \times [0,T]; \R^3)},
	\end{equation}
	where $C > 0$ depends only on $\theta$ and $\Omega$.
\end{lemma}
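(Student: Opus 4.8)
The plan is to use that the stray field is, by definition, a time-independent linear spatial operator arising from a magnetostatic transmission problem, and then to invoke elliptic and potential regularity. Recall that $\boldsymbol{h}_s(\u) = -\nabla u$, where the magnetostatic potential $u$ solves $\Delta u = \div \u$ in $\Omega$ and $\Delta u = 0$ in $\R^3 \setminus \overline{\Omega}$, subject to the transmission conditions $\jump{u} = 0$ and $\jump{\partial_{\normalv} u} = -\,\u \cdot \normalv$ on $\partial\Omega$ and to decay at infinity, with $\u$ understood to be extended by zero outside $\Omega$. Writing $u = \Phi * \div \widetilde{\u}$ with $\Phi$ the Newtonian kernel and $\widetilde{\u}$ the zero extension, this splits $u$ into the Newtonian volume potential of $\div \u \in C^{1,\theta}(\overline{\Omega})$ and the single-layer potential of the surface charge $\u \cdot \normalv \in C^{2,\theta}(\partial\Omega)$. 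Since the construction is linear in $\u$ and independent of $t$, it suffices to establish the purely spatial estimate $\norm{\boldsymbol{h}_s(\u(\cdot,t))}{C^{2,\theta}(\overline{\Omega})} \le C \norm{\u(\cdot,t)}{C^3(\overline{\Omega})}$ at each fixed time $t$: the time regularity then follows because $\partial_t$ commutes with $\boldsymbol{h}_s$, so that $\partial_t^j \boldsymbol{h}_s(\u) = \boldsymbol{h}_s(\partial_t^j \u)$ and the space-time Hölder norm of $\boldsymbol{h}_s(\u)$ reduces to finitely many spatial estimates applied to $\partial_t^j \u$ for $j = 0,1,2$, each controlled by $\norm{\u}{C^3(\overline{\Omega}\times[0,T])}$.

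For the interior regularity I would invoke classical Schauder theory. Since $\div \u \in C^{1,\theta}(\overline{\Omega})$ with $\norm{\div \u}{C^{1,\theta}(\overline{\Omega})} \lesssim \norm{\u}{C^3(\overline{\Omega})}$, interior elliptic estimates for $\Delta u = \div \u$ yield $u \in C^{3,\theta}_{\mathrm{loc}}(\Omega)$, hence $\boldsymbol{h}_s(\u) = -\nabla u \in C^{2,\theta}_{\mathrm{loc}}(\Omega)$, with all local Hölder seminorms on compact subsets of $\Omega$ bounded in terms of $\norm{\u}{C^3(\overline{\Omega})}$.

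The main obstacle is regularity up to the boundary. The tempting shortcut of writing $\boldsymbol{h}_s(\u) = -\nabla \div (\Phi * \widetilde{\u})$ and treating it as a Calderón--Zygmund operator on $\R^3$ fails, because the zero extension $\widetilde{\u}$ jumps across $\partial\Omega$ and is therefore not Hölder continuous on $\R^3$; this very jump is the surface charge feeding the single-layer potential. Instead, I would control the boundary contribution through the mapping properties of the layer potentials between Hölder spaces on $\partial\Omega$ and $\overline{\Omega}$, or, equivalently, through boundary Schauder estimates for the transmission problem. Concretely, for a sufficiently smooth boundary the volume potential of $\div \u \in C^{1,\theta}(\overline{\Omega})$ restricts to a $C^{3,\theta}(\overline{\Omega})$ function, and the single-layer potential of $\u \cdot \normalv \in C^{2,\theta}(\partial\Omega)$ restricts on the interior side to a $C^{3,\theta}(\overline{\Omega})$ function, so that $u \in C^{3,\theta}(\overline{\Omega})$ and hence $\boldsymbol{h}_s(\u) = -\nabla u \in C^{2,\theta}(\overline{\Omega})$. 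Flattening the boundary and reducing to half-space estimates is the technically demanding step; it is here that smoothness of $\partial\Omega$ beyond mere Lipschitz regularity enters (consistent with the $C^3$ regularity demanded of the strong solution), and that one uses $\normalv \in C^{2,\theta}(\partial\Omega)$ to guarantee $\u \cdot \normalv \in C^{2,\theta}(\partial\Omega)$.

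Finally, I would merge the interior and boundary estimates into the global bound~\eqref{eq:pi-bounded}, with the constant $C$ depending only on $\theta$ and on the geometry of $\Omega$ through the Schauder and layer-potential constants, and then pass from the spatial estimate to the full space-time Hölder bound via the commutation $\partial_t^j \boldsymbol{h}_s(\u) = \boldsymbol{h}_s(\partial_t^j \u)$ noted above. No nonlinear bookkeeping is required, since $\boldsymbol{h}_s$ is linear, so the dependence on $\norm{\u}{C^3(\overline{\Omega}\times[0,T])}$ is automatically linear.
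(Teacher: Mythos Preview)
Your approach via Schauder estimates and layer-potential mapping properties is valid but genuinely different from the paper's. The paper gives a three-line argument entirely in Sobolev spaces on the space-time cylinder: embed $C^3(\overline{\Omega}\times[0,T]) \hookrightarrow W^{3,p}(\Omega\times(0,T))$ for all finite $p$, invoke a cited result that $\boldsymbol{h}_s$ is bounded on $W^{3,p}(\Omega\times(0,T))$, and conclude with Morrey's embedding $W^{3,p} \hookrightarrow C^{2,\theta}$ for $p$ sufficiently large. This treats space and time uniformly and completely bypasses your separation into spatial and temporal regularity. What your route buys is self-containment: you derive the H\"older regularity directly from potential theory rather than importing the Sobolev mapping property as a black box. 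The price is twofold. First, the boundary Schauder and layer-potential estimates you invoke require $\partial\Omega$ to be smoother than Lipschitz (you yourself note that $\normalv \in C^{2,\theta}$ is needed), whereas the paper explicitly works on a Lipschitz domain. Second, your time-regularity step, while correct in spirit, needs the sharper spatial mapping $\boldsymbol{h}_s\colon C^{2,\theta}(\overline\Omega)\to C^{2,\theta}(\overline\Omega)$ rather than merely $C^3\to C^{2,\theta}$, since to control the $\theta$-H\"older seminorm in $t$ of the second spatial derivatives of $\boldsymbol{h}_s(\u)$ you must estimate $\boldsymbol{h}_s(\u(\cdot,t)-\u(\cdot,s))$ in $C^{2,\theta}_x$, and $\norm{\u(\cdot,t)-\u(\cdot,s)}{C^3(\overline\Omega)}$ is not small in $|t-s|$.
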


\begin{proof}
	Since $\u \in C^3(\overline{\Omega} \times [0,T]; \R^3) \subset W^{3,p}(\Omega \times (0,T); \R^3)$ for all $1 \leq p \leq \infty$, we infer from~\cite[Proposition~3.1]{cfg07} that $\boldsymbol{h}_s(\u) \in W^{3,p}(\Omega \times (0,T); \R^3)$ for all $1 < p < \infty$.
	Due to $\Omega$ being bounded and Lipshitz, Morrey's embedding \cite[Theorem~12.55]{leoni17} yields for $p > 3$ the continuous embedding
	\begin{equation*}
		W^{3,p}(\Omega \times (0,T); \R^3)
		\subset
		C^{2,\theta}(\overline{\Omega} \times [0,T]; \R^3)
		\quad \text{with} \quad
		\theta = 1-3/p > 0.
	\end{equation*}
	Since $p > 3$ was arbitrary, this proves the claim.
\end{proof}

%%%%%%%%%%%%%%%%%%%%%%%%%%%%%%%%%%%%%%%%%%%%%%%%%%%%%%%%%%%%%%%%%%%%%%%%%%%%%%%%%%%
\subsection{Inequalities}\label{subsec:inequalities}
%%%%%%%%%%%%%%%%%%%%%%%%%%%%%%%%%%%%%%%%%%%%%%%%%%%%%%%%%%%%%%%%%%%%%%%%%%%%%%%%%%%
For later reference, we collect here the following two well-known inequalities. 
First, we cite an appropriate version of the Gronwall inequality from \cite[Appendix~B.2.k]{evans}.

\begin{lemma}[Gronwall inequality]\label{prop:gronwall}
	Let $u : [0,T] \to \R$ be nonnegative and integrable such that
	\begin{equation*}
		u(t)
		\leq
		C \int_0^t u(\tau) \d{\tau}
		\quad \text{for all} \quad
		0 \leq t \leq T
	\end{equation*}
	for a constant $C \geq 0$. Then, it follows that
	\begin{equation*}
		u(t) = 0
		\quad \text{for almost all} \quad
		0 \leq t \leq T. \hfill \qquad \qed
	\end{equation*}
\end{lemma}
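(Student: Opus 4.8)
The plan is to reduce the pointwise inequality for $u$ to a tractable inequality for its antiderivative. Since $u$ is only assumed integrable, I cannot manipulate it pointwise in a smooth way, so first I would introduce
\[
	w(t) := \int_0^t u(\tau) \d{\tau},
\]
which is continuous (in fact absolutely continuous) on $[0,T]$ with $w(0) = 0$, and nonnegative since $u \geq 0$. Because $w'(t) = u(t)$ for almost every $t$, integrating the hypothesis $u(t) \leq C w(t)$ over $(0,t)$ transfers the same Gronwall-type bound to the continuous function $w$, namely $w(t) \leq C \int_0^t w(\tau) \d{\tau}$ for all $t \in [0,T]$.

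From here I would argue by iteration, which keeps the proof elementary and in line with the rest of the paper. As $w$ is continuous on the compact interval $[0,T]$, set $A := \max_{[0,T]} w < \infty$. Feeding the crude bound $w(\tau) \leq A$ into the inequality yields $w(t) \leq C A t$; substituting this improved bound back in gives $w(t) \leq C^2 A t^2 / 2$, and an induction on $n$ produces
\[
	w(t) \leq A \, \frac{(Ct)^n}{n!}
	\qquad \text{for all } n \in \N \text{ and } t \in [0,T].
\]
Letting $n \to \infty$, the right-hand side vanishes, so $w \equiv 0$ on $[0,T]$. The hypothesis then forces $u(t) \leq C w(t) = 0$ for almost every $t$, which is the claim.

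As an alternative, one may use an integrating factor: the function $v(t) := e^{-Ct} w(t)$ is absolutely continuous with $v'(t) = e^{-Ct}\big(u(t) - C w(t)\big) \leq 0$ almost everywhere, so $v$ is nonincreasing; combined with $v(0) = 0$ and $v \geq 0$ this gives $v \equiv 0$, hence $w \equiv 0$, and again $u = 0$ almost everywhere.

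I expect no genuine obstacle here; the statement is a standard tool. The only point requiring care is precisely the low regularity of $u$: because the hypothesis controls $u$ only through its integral, the argument must be routed through the continuous antiderivative $w$, and correspondingly the conclusion can only be "$u = 0$ almost everywhere" rather than everywhere.
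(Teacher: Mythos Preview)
Your argument is correct. Both routes you sketch --- the iteration yielding $w(t) \leq A (Ct)^n/n!$ and the integrating-factor argument with $v(t) = e^{-Ct} w(t)$ --- are standard and sound, and you correctly route the argument through the absolutely continuous antiderivative $w$ to cope with the mere integrability of $u$.

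However, there is nothing to compare against: the paper does not prove this lemma at all. It is stated as a well-known tool and simply cited from~\cite[Appendix~B.2.k]{evans}, with the $\qed$ symbol placed directly at the end of the statement. So your proposal goes beyond what the paper does; it supplies a clean self-contained proof where the paper only gives a reference.
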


Next, we state the Poincar\'e inequality in time.
\begin{lemma}[Poincar\'e inequality]\label{prop:poincare}
	Let $\u \in H^1((0,T); L^2(\Omega; \R^d))$ with $\w(0) = 0$.
	Then, for all $t \in [0,T]$, it holds that
	\begin{equation}
	\label{eq:poincare}
		\int_0^t \norm[]{\u(\tau)}{\Omega}^2 \d{\tau}
		\leq
		t^2 \int_0^t \norm[]{\partial_t \u(\tau) }{\Omega}^2 \d{\tau}. \qquad \qed
	\end{equation}
\end{lemma}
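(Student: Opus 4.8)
The plan is to exploit the vanishing initial condition $\u(0) = \vec{0}$ together with the fundamental theorem of calculus for Bochner--Sobolev functions. Since $\u \in H^1((0,T); L^2(\Omega; \R^d))$ embeds continuously into $C([0,T]; L^2(\Omega; \R^d))$, the pointwise value $\u(\tau)$ is a well-defined element of $L^2(\Omega; \R^d)$ for every $\tau$, the constraint $\u(0) = \vec{0}$ is meaningful, and one has the representation
\[
	\u(\tau) = \int_0^\tau \partial_t \u(s) \d{s}
	\qquad \text{in } L^2(\Omega; \R^d),
\]
understood as a Bochner integral, for all $\tau \in [0,T]$.

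First I would bound $\norm{\u(\tau)}{\Omega}$ pointwise in $\tau$. Taking the $L^2(\Omega)$-norm of the representation above, using the triangle inequality for the Bochner integral and then the Cauchy--Schwarz inequality in $s$, gives
\[
	\norm{\u(\tau)}{\Omega}
	\le \int_0^\tau \norm{\partial_t \u(s)}{\Omega} \d{s}
	\le \sqrt{\tau} \bigg( \int_0^\tau \norm{\partial_t \u(s)}{\Omega}^2 \d{s} \bigg)^{1/2}
	\le \sqrt{t} \bigg( \int_0^t \norm{\partial_t \u(s)}{\Omega}^2 \d{s} \bigg)^{1/2},
\]
where the final step enlarges the domain of integration from $[0,\tau]$ to $[0,t]$ using $\tau \le t$. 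Squaring yields the pointwise estimate $\norm{\u(\tau)}{\Omega}^2 \le t \int_0^t \norm{\partial_t \u(s)}{\Omega}^2 \d{s}$, whose right-hand side no longer depends on $\tau$.

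Finally, integrating this pointwise bound over $\tau \in [0,t]$ produces an additional factor $t$ and thus gives exactly the asserted inequality~\eqref{eq:poincare} with constant $t^2$; a slightly sharper iterated-integral estimate would even yield the constant $t^2/2$, but the stated bound is all that is used later. I do not expect a genuine obstacle here, as this is a standard Poincar\'e--Wirtinger inequality in the time variable. The only point requiring a little care is the functional-analytic setup, namely justifying the fundamental theorem of calculus and the triangle inequality in the Bochner space $L^2(\Omega; \R^d)$ rather than for scalar-valued functions; this is classical and follows from the absolute continuity of $H^1$-in-time maps with values in a Hilbert space.
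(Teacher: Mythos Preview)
Your argument is correct: the fundamental theorem of calculus for $H^1$-in-time Bochner functions together with Cauchy--Schwarz in the time variable gives exactly the bound with constant $t^2$, and your remark that $t^2/2$ would in fact suffice is also right. The paper itself does not supply a proof of this lemma (it is stated with a \qed\ as a known result), so there is no alternative approach to compare; what you wrote is the standard derivation and matches what the authors implicitly rely on.
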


%\begin{proof}
%	By the fundamental theorem of calculus, for every $0 \leq \tau \leq t$ it holds that
%	\begin{align*}
%		\norm{ \u(\tau) }{\Omega}
%		&=
%		\norm[\Big]{\int_0^\tau \partial_t \u(s) \d{s}}{\Omega}
%		\leq
%		\int_0^\tau \norm{ \partial_t \u(s) }{\Omega} \d{s} \\
%		&\qquad \leq
%		\tau^{1/2} \Big( \int_0^\tau \norm{ \partial_t \u(s) }{\Omega}^2 \d{s} \Big)^{1/2}
%		\leq
%		t^{1/2} \Big( \int_0^t \norm{ \partial_t \u(s) }{\Omega}^2 \d{s} \Big)^{1/2}.
%	\end{align*}
%	From this, we infer that
%	\begin{equation*}
%		\int_0^t \norm{ \u(\tau) }{\Omega}^2 \d{\tau}
%		\leq
%		\int_0^t t \int_0^t \norm{ \partial_t \u(s) }{\Omega}^2 \d{s} \d{\tau}
%		=
%		t^2 \int_0^t \norm{ \partial_t \u(\tau) }{\Omega}^2 \d{\tau}.
%	\end{equation*}
%	This concludes the proof.
%\end{proof}

%\clearpage
%!TEX root = weak-strong.tex

%%%%%%%%%%%%%%%%%%%%%%%%%%%%%%%%%%%%%%%%%%%%%%%%%%%%%%%%%%%%%%%%%%%%%%%%%%%%%%%%%%%
%%%%%%%%%%%%%%%%%%%%%%%%%%%%%%%%%%%%%%%%%%%%%%%%%%%%%%%%%%%%%%%%%%%%%%%%%%%%%%%%%%%
\section{Uniqueness of strong solutions}\label{sec:strong}
%%%%%%%%%%%%%%%%%%%%%%%%%%%%%%%%%%%%%%%%%%%%%%%%%%%%%%%%%%%%%%%%%%%%%%%%%%%%%%%%%%%
%%%%%%%%%%%%%%%%%%%%%%%%%%%%%%%%%%%%%%%%%%%%%%%%%%%%%%%%%%%%%%%%%%%%%%%%%%%%%%%%%%%

In this section, we prove uniqueness of strong solutions of \eqref{eq:result:llg-strong-system}.
To illustrate the idea of the proof of the strong-weak uniqueness of solutions of LLG (Theorem~\ref{theorem:main}), we give a very simple argument which will also serve as an overture.
A similar idea was already used by~\cite{cimrak07}.

\begin{theorem}\label{theorem:strong-strong}
	Let $\m_0 \in H^1(\Omega; \S^2)$ and $T > 0$.
	Suppose $\m_1, \m_2 \in C^3(\overline{\Omega} \times [0,T]; \S^2)$ are strong solutions of \eqref{eq:result:llg-strong-system}.
	Then, it follows that
	\begin{equation*}
	\m_1 = \m_2
	\qquad
	\text{on } \Omega \times (0,T).
	\end{equation*}
\end{theorem}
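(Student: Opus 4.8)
The plan is to run a Gronwall argument on a coercive ``error energy'' built from the helical energy of the difference $\w := \m_1 - \m_2$. Since both $\m_1, \m_2 \in C^3(\overline{\Omega}\times[0,T];\S^2)$ share the initial datum $\m_0$, the field $\w$ is smooth with $\w(0,\cdot) = \vec{0}$, so all manipulations below are classical. Fix $C>0$ sufficiently large (larger than the operator norm of $\boldsymbol{\pi}$ plus a constant absorbing the Dzyaloshinskii--Moriya cross term in $\nabla_\helical$) and set
\[
	g(t) := \tfrac12\norm{\nabla_\helical\w(t)}{\Omega}^2 - \tfrac12\skp{\w(t)}{\boldsymbol{\pi}(\w(t))}{\Omega} + \tfrac{C}{2}\norm{\w(t)}{\Omega}^2.
\]
By boundedness of $\boldsymbol{\pi}$ and $\norm{\nabla_\helical\w}{\Omega}^2\ge 0$, a suitable choice of $C$ makes $g$ nonnegative and ensures $g\gtrsim\norm{\w}{\Omega}^2$ together with $\norm{\nabla_\helical\w}{\Omega}^2\lesssim g$. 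Since $g(0)=0$, it then suffices to prove an estimate of the form $g(t)\le C_*\int_0^t g(\tau)\d{\tau}$ and invoke the Gronwall Lemma~\ref{prop:gronwall} to conclude $g\equiv 0$, hence $\m_1=\m_2$.

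First I would differentiate $g$ in time. Using $\partial_t\nabla_\helical\w = \nabla_\helical\partial_t\w$ (Lemma~\ref{lemma:helical-properties}(ii)), the helical integration by parts~\eqref{eq:auxiliary:helical-ibp} --- whose boundary term vanishes because $\nabla_\helical\w\cdot\normalv = \nabla_\helical\m_1\cdot\normalv - \nabla_\helical\m_2\cdot\normalv = \vec{0}$ by~\eqref{eq:helical-boundary-condition} --- and self-adjointness of $\boldsymbol{\pi}$, one obtains
\[
	g'(t) = -\skp{\partial_t\w}{\Delta_\helical\w + \boldsymbol{\pi}(\w)}{\Omega} + C\skp{\partial_t\w}{\w}{\Omega}.
\]
Writing $\vec{b}_j := \Psi[\m_j]-\vec{f}$ as in~\eqref{eq:psi-operators}, the reformulation~\eqref{eq:strong-reformulation} reads $\partial_t\m_j = \m_j\times\vec{b}_j$, and the pointwise conservation law~\eqref{eq:result:alternate-energy-inequality} is precisely the orthogonality $\partial_t\m_j\cdot\vec{b}_j = 0$. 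Since $\vec{b}_1-\vec{b}_2=\Psi[\w]$ and $\Delta_\helical\w+\boldsymbol{\pi}(\w) = \alpha\partial_t\w - \Psi[\w] = \alpha\partial_t\w - (\vec{b}_1-\vec{b}_2)$, I would expand $\partial_t\w = \m_1\times\vec{b}_1 - \m_2\times\vec{b}_2$ and use the orthogonality $\partial_t\m_j\cdot\vec{b}_j=0$ together with the scalar triple-product identity to rearrange the leading term into
\[
	-\skp{\partial_t\w}{\Delta_\helical\w+\boldsymbol{\pi}(\w)}{\Omega} = -\alpha\norm{\partial_t\w}{\Omega}^2 - \skp{\w}{\vec{b}_1\times\vec{b}_2}{\Omega} = -\alpha\norm{\partial_t\w}{\Omega}^2 + \skp{\Psi[\w]}{\w\times\vec{b}_1}{\Omega},
\]
where the last identity uses $\vec{b}_1\times\vec{b}_2 = -\vec{b}_1\times\Psi[\w]$ and cyclicity of the triple product.

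The crux --- and the step I expect to be the main obstacle --- is to control the genuinely second-order contribution $-\skp{\Delta_\helical\w}{\w\times\vec{b}_1}{\Omega}$ hidden inside $\skp{\Psi[\w]}{\w\times\vec{b}_1}{\Omega}$. Here I would integrate by parts once more via~\eqref{eq:auxiliary:helical-ibp} (again with vanishing boundary term), apply the helical Leibniz rule~\eqref{eq:auxiliary:chainrule}, and exploit the pointwise orthogonality $(\partial^\helical_i\w\times\vec{b}_1)\cdot\partial^\helical_i\w = 0$ to annihilate the top-order part; what survives is $\skp{\w\times\nabla_\helical\vec{b}_1}{\nabla_\helical\w}{\Omega}$. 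This is exactly where the $C^3$-regularity of the strong solutions enters: $\vec{b}_1$ contains $\Delta_\helical\m_1$ and $\partial_t\m_1$, so $\nabla_\helical\vec{b}_1$ involves third derivatives of $\m_1$ and is bounded in $L^\infty$ (also using the regularity~\eqref{eq:pi-regularity} of $\boldsymbol{\pi}$ and $\vec{f}\in C^1$), whence this term is bounded by $C\norm{\w}{\Omega}\norm{\nabla_\helical\w}{\Omega}$.

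The remaining, lower-order pieces $\alpha\skp{\partial_t\w}{\w\times\vec{b}_1}{\Omega}$, $-\skp{\boldsymbol{\pi}(\w)}{\w\times\vec{b}_1}{\Omega}$, and $C\skp{\partial_t\w}{\w}{\Omega}$ are then estimated by the Cauchy--Schwarz and Young inequalities, with all $\norm{\partial_t\w}{\Omega}^2$ contributions absorbed into the favorable dissipative term $-\alpha\norm{\partial_t\w}{\Omega}^2$. Combining everything and using the coercivity of $g$ yields the pointwise differential inequality $g'(t)\le C_*\,g(t)$; integrating in time from $0$, where $g$ vanishes, gives $g(t)\le C_*\int_0^t g(\tau)\d{\tau}$, so that Lemma~\ref{prop:gronwall} forces $g\equiv 0$ and therefore $\m_1=\m_2$ on $\Omega\times(0,T)$.
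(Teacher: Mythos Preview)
Your argument is correct and follows the paper's route almost exactly at the algebraic core: the identity you derive,
\[
\skp{\partial_t\w}{\Psi[\w]}{\Omega}=\skp{\Psi[\w]}{\w\times(\Psi[\m_1]-\vec f)}{\Omega},
\]
is precisely Lemma~\ref{lemma:strong-estimate}, and your treatment of the dangerous term $-\skp{\Delta_\helical\w}{\w\times\vec b_1}{\Omega}$ via the helical integration by parts and Leibniz rule reproduces exactly~\eqref{eq:strong:expanded-estimate}.

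The one genuine difference is how you close the Gronwall loop. The paper keeps only $\tfrac12\norm{\nabla_\helical\w}{\Omega}^2$ as the Gronwall quantity; the stray $\norm{\w}{\Omega}^2$ terms on the right are then controlled by the Poincar\'e-in-time inequality (Lemma~\ref{prop:poincare}), which forces a restriction to a short time interval $[0,T_\ast)$ and a subsequent iteration over finitely many such intervals. You instead build $\tfrac{C}{2}\norm{\w}{\Omega}^2$ into the functional $g$ from the start, so that $\norm{\w}{\Omega}^2\lesssim g$ directly and no Poincar\'e or iteration is needed; the extra term $C\skp{\partial_t\w}{\w}{\Omega}$ it generates is harmless after Young and absorption into $-\alpha\norm{\partial_t\w}{\Omega}^2$. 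Your variant is slightly more streamlined; the paper's variant, on the other hand, is tailored so that the resulting time-integrated inequality matches the structure coming from the energy \emph{inequality} in the weak--strong setting of Section~\ref{sec:proof}, where one cannot freely differentiate a Gronwall functional.
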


We begin by investigating the value of the conservation law~\eqref{eq:result:alternate-energy-inequality} for strong solutions at the difference of the two solutions.
\begin{lemma}\label{lemma:strong-estimate}
	Suppose the assumptions of Theorem~\ref{theorem:strong-strong}.
	For all times $0< t < T$, the difference $\w := \m_2 - \m_1$ then satisfies that
	\begin{equation}
	\label{eq:strong:estimate}
		\skp[\big]{\partial_t \w}{\Psi[\w]}{\Omega}
		=
		\skp[\big]{\Psi[\w]}{\w \times (\Psi[\m_1] - \vec{f})}{\Omega}.
	\end{equation}
\end{lemma}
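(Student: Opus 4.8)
The plan is to prove \eqref{eq:strong:estimate} as a \emph{pointwise} identity in $\Omega \times (0,T)$ and then simply integrate over $\Omega$, so that no boundary terms or regularity subtleties enter. To set up the algebra cleanly, I would abbreviate $\vec{g}_i := \Psi[\m_i] - \vec{f}$ for $i = 1,2$. Since $\partial_t$, $\Delta_\helical$, and $\boldsymbol{\pi}$ are all linear, the operator $\Psi$ is linear (cf.\ \eqref{eq:psi-operators}) and the applied-field contributions cancel in the difference, so that $\Psi[\w] = \vec{g}_2 - \vec{g}_1$. Moreover, the reformulated strong equation \eqref{eq:strong-reformulation} reads $\partial_t \m_i = \m_i \times \vec{g}_i$, whence $\partial_t \w = \m_2 \times \vec{g}_2 - \m_1 \times \vec{g}_1$.

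First, I would expand the left-hand integrand of \eqref{eq:strong:estimate} as
\[
	\partial_t \w \cdot \Psi[\w]
	= (\partial_t \m_2 - \partial_t \m_1) \cdot (\vec{g}_2 - \vec{g}_1).
\]
The two diagonal terms vanish by the pointwise conservation law \eqref{eq:result:alternate-energy-inequality}, which in the helical formulation reads precisely $\partial_t \m_i \cdot \vec{g}_i = 0$, since $\alpha \partial_t \m_i - \heff(\m_i) = \vec{g}_i$. Inserting $\partial_t \m_i = \m_i \times \vec{g}_i$ into the surviving off-diagonal terms leaves
\[
	\partial_t \w \cdot \Psi[\w]
	= - (\m_2 \times \vec{g}_2) \cdot \vec{g}_1 - (\m_1 \times \vec{g}_1) \cdot \vec{g}_2.
\]

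Next, I would expand the right-hand integrand, $\Psi[\w] \cdot (\w \times \vec{g}_1) = (\vec{g}_2 - \vec{g}_1) \cdot ((\m_2 - \m_1) \times \vec{g}_1)$. The two terms of the form $\vec{g}_1 \cdot (\,\cdot \times \vec{g}_1)$ vanish because a scalar triple product with a repeated argument is zero, which leaves $\vec{g}_2 \cdot (\m_2 \times \vec{g}_1) - \vec{g}_2 \cdot (\m_1 \times \vec{g}_1)$. Comparing with the surviving left-hand side, the identity then follows term by term from the elementary properties of the scalar triple product (cyclic invariance and sign reversal under a transposition): one checks $-(\m_2 \times \vec{g}_2) \cdot \vec{g}_1 = \vec{g}_2 \cdot (\m_2 \times \vec{g}_1)$ and $-(\m_1 \times \vec{g}_1) \cdot \vec{g}_2 = -\vec{g}_2 \cdot (\m_1 \times \vec{g}_1)$. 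Integrating the resulting pointwise equality over $\Omega$ yields \eqref{eq:strong:estimate}.

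The whole argument is elementary once the conservation law \eqref{eq:result:alternate-energy-inequality} is invoked to kill the diagonal terms; the only point requiring care is the bookkeeping of signs in the triple-product manipulations. I therefore expect the main (and rather mild) obstacle to be organizing these cancellations transparently, rather than any genuine analytic difficulty.
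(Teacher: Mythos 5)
Your argument is correct and follows essentially the same route as the paper's proof: linearity of $\Psi$, the conservation law \eqref{eq:result:alternate-energy-inequality} (equivalently, $\partial_t\m_i\cdot(\Psi[\m_i]-\vec{f}) = (\m_i\times(\Psi[\m_i]-\vec{f}))\cdot(\Psi[\m_i]-\vec{f})=0$, which already follows from \eqref{eq:strong-reformulation}) to kill the diagonal terms, substitution of the strong form into the off-diagonal terms, and the standard scalar triple-product identities. The only difference is cosmetic --- you carry out the cancellations pointwise and integrate at the end, while the paper works directly with the $L^2(\Omega)$ inner products --- so no further comment is needed.
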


\begin{proof}
	Note that $\Psi$ is linear, whence we have $\Psi[\m_2] - \Psi[\m_1] = \Psi[\w].$
	The alternate form of the energy preservation~\eqref{eq:result:alternate-energy-inequality} reads $\skp{\partial_t \m_j}{\Psi[\m_j] - \vec{f}}{} = 0$.
	Hence, we see that
	\begin{align}
	\nonumber
		\skp[]{\partial_t \w}{\Psi[\w]}{\Omega}
		&=
		\skp[]{\partial_t \m_1}{\Psi[\m_1]}{\Omega}
		+ \skp[]{\partial_t \m_2}{\Psi[\m_2]}{\Omega}
		- \skp[]{\partial_t \m_1}{\Psi[\m_2]}{\Omega}
		- \skp[]{\partial_t \m_2}{\Psi[\m_1]}{\Omega}\\
	\label{eq:strong:remainder}
		&\stackrel{\mathclap{\eqref{eq:result:alternate-energy-inequality}}}{=}
		\skp{\partial_t \m_1}{\vec{f}}{\Omega}
		+ \skp{\partial_t \m_2}{\vec{f}}{\Omega}
		- \skp[]{\partial_t \m_1}{\Psi[\m_2]}{\Omega}
		- \skp[]{\partial_t \m_2}{\Psi[\m_1]}{\Omega}\\
	\nonumber
		&=
		- \skp[]{\partial_t \m_1}{\Psi[\m_2] - \vec{f}}{\Omega}
		- \skp[]{\partial_t \m_2}{\Psi[\m_1] - \vec{f}}{\Omega}.
	\end{align}
	Using the strong form~\eqref{eq:strong-reformulation} for $\partial_t \m_1$ and $\partial_t \m_2$, together with basic properties of the triple product, we get
	\begin{align*}
		\skp[]{\partial_t \w}{\Psi[\w]}{\Omega}
		&\stackrel{\mathclap{\eqref{eq:strong-reformulation}}}{=}
		- \skp[]{\m_1 \times (\Psi[\m_1] - \vec{f})}{\Psi[\m_2] - \vec{f}}{\Omega}
		- \skp[]{\m_2 \times (\Psi[\m_2] - \vec{f})}{\Psi[\m_1] - \vec{f}}{\Omega}\\
		&=
		- \skp[]{\m_1 \times (\Psi[\m_1] - \vec{f})}{\Psi[\m_2] - \vec{f}}{\Omega}
		+ \skp[]{\m_2 \times (\Psi[\m_1] - \vec{f})}{\Psi[\m_2] - \vec{f}}{\Omega}\\
		&=
		\skp[]{\w \times (\Psi[\m_1] - \vec{f})}{\Psi[\m_2] - \vec{f}}{\Omega}
		= \skp[]{\w \times (\Psi[\m_1] - \vec{f})}{\Psi[\w]}{\Omega}.
	\end{align*}
	This proves the assertion.
\end{proof}

To prove Theorem~\ref{theorem:strong-strong}, we apply the Gronwall lemma (Lemma~\ref{prop:gronwall}) to the estimate~\eqref{eq:strong:estimate} of Lemma~\ref{lemma:strong-estimate}.

\begin{proof}[Proof of Theorem~\ref{theorem:strong-strong}]
	We expand the estimate \eqref{eq:strong:estimate} to obtain that
	\begin{align*}
		& \skp[\big]{\partial_t \w}{\alpha \partial_t \w - \Delta_\helical \w - \boldsymbol{\pi}(\w)}{\Omega}
		=
		\skp[\big]{\partial_t \w}{\Psi[\w]}{\Omega} \\
		& \qquad \qquad \stackrel{\eqref{eq:strong:estimate}}{=}
		\skp[\big]{\Psi[\w]}{\w \times (\Psi[\m_1] - \vec{f})}{\Omega}
		=
		\skp[\big]{\alpha \partial_t \w - \Delta_\helical \w - \boldsymbol{\pi}(\w)}{\w \times (\Psi[\m_1] - \vec{f})}{\Omega}.
	\end{align*}
	Using the properties of the helical derivative from Lemma~\ref{lemma:helical-properties} and the fact that $\w$ satisfies the boundary condition~\eqref{eq:helical-boundary-condition}, we see that
	\begin{equation}
	\label{eq:strong:expanded-estimate}
	\begin{split}
		&\alpha \norm[]{\partial_t \w}{\Omega}^2
		+ \frac{1}{2} \partial_t \norm[]{\nabla_\helical \w}{\Omega}^2
		- \skp[\big]{\partial_t \w}{\boldsymbol{\pi}(\w)}{\Omega}
		= 
		\alpha \skp[\big]{\partial_t \w}{\w \times (\Psi[\m_1] - \vec{f})}{\Omega} \\
		& \hspace{90pt} + \skp[\big]{\nabla_\helical \w}{\w \times \nabla_\helical (\Psi[\m_1] - \vec{f})}{\Omega} 
		- \skp[\big]{\boldsymbol{\pi}(\w)}{\w \times (\Psi[\m_1] - \vec{f})}{\Omega}
		 .
	\end{split}
	\end{equation}
	Recall from Section~\ref{subsec:llg} that $\boldsymbol{\pi}$ is $L^2$-bounded and satisfies~\eqref{eq:pi-regularity}.
	Because of the smoothness of $\m_1 \in C^3(\overline{\Omega} \times [0,T]; \R^3)$ and $\vec{f} \in C^1(\overline{\Omega} \times [0,T]; \R^3)$, this implies that
	\begin{align*}
		\norm{\boldsymbol{\pi}(\m_1)}{\Omega}
		\leq
		C_\pi \norm{\m_1}{\Omega}
		\quad \text{and} \quad
		\norm{\Psi[\m_1] - \vec{f}}{L^\infty(\Omega)}
		+ \norm{\nabla (\Psi[\m_1] - \vec{f})}{L^\infty(\Omega)}
		\leq
		C_\Psi
	\end{align*}
	with $C_\pi > 0$ depending only on $\Omega$ and the problem parameters, and $C_\Psi > 0$ additionally depending on $\norm{\m_1}{C^3(\overline{\Omega} \times [0,T]; \R^3)}$ and $\norm{\vec{f}}{C^1(\overline{\Omega} \times [0,T]; \R^3)}$.
	Moreover, recall the Young inequality
	\begin{equation*}
	\label{eq:young}
		ab \leq \frac{\delta}{2} a^2 + \frac{1}{2\delta} b^2
		\quad \text{for all} ~
		a, b \geq 0 \text{ and } \delta > 0.
	\end{equation*}
	Together with the Cauchy--Schwarz inequality, we bound the scalar product terms in~\eqref{eq:strong:expanded-estimate} as follows:
	\begin{align*}
		\skp[\big]{\partial_t \w}{\boldsymbol{\pi}(\w)}{\Omega}
		&\leq
		\frac{\delta}{2} \norm[]{ \partial_t \w }{\Omega}^2 + \frac{1}{2 \delta} C_{\pi}^2 \, \norm[]{ \w }{\Omega}^2, \\
		\alpha \skp[\big]{\partial_t \w}{\w \times (\Psi[\m_1] - \vec{f})}{\Omega}
		&\leq
		\frac{\alpha \delta}{2} \norm[]{ \partial_t \w }{\Omega}^2 + \frac{\alpha}{2 \delta} C_\Psi^2 \norm[]{ \w }{\Omega}^2, \\
		\skp[\big]{\nabla_\helical \w}{\w \times \nabla_\helical (\Psi[\m_1] - \vec{f})}{\Omega}
		&\leq
		\frac{1}{2} \norm[]{ \nabla_\helical \w }{\Omega}^2 + \frac{1}{2} C_\Psi^2 \norm[]{ \w }{\Omega}^2, \\
		- \skp[\big]{\boldsymbol{\pi}(\w)}{\w \times (\Psi[\m_1] - \vec{f})}{\Omega}
		&\leq
		\frac{1}{2} C_{\pi}^2 \, \norm[]{ \w }{\Omega}^2 + \frac{1}{2} C_\Psi^2 \norm[]{ \w }{\Omega}^2.
	\end{align*}
	
	Using these bounds in \eqref{eq:strong:expanded-estimate} and summing similar terms, we obtain that
	\begin{equation}
	\label{eq:summary}
		C_\ell(\delta) \norm[]{ \partial_t \w }{\Omega}^2
		+ \frac{1}{2} \partial_t \norm[]{ \nabla_\helical \w}{\Omega}^2
		\leq
		\frac{1}{2} \norm[]{ \nabla_\helical \w }{\Omega}^2
		+ C_r(\delta) \norm[]{ \w }{\Omega}^2,
	\end{equation}
	where
	\begin{equation*}
		C_\ell(\delta)
		:=
		\alpha - \frac{\alpha \delta + \delta}{2}
		\quad \text{and} \quad
		C_r(\delta)
		:=
		\Big( \frac{\alpha}{2 \delta} + 1 \Big) C_\Psi^2 + \Big( \frac{1}{2} + \frac{1}{2 \delta} \Big) C_{\pi}^2.
	\end{equation*}
	Integrating \eqref{eq:summary} over $(0,t)$, where $0 < t < T$, and observing that
	\begin{equation}
	\nabla_\helical \w(0) = \nabla_\helical(\m_2(0) - \m_1(0)) = 0,
	\end{equation} 
	we thus arrive at
	\begin{equation*}
		C_\ell(\delta)
		\int_0^t \norm[]{ \partial_t \w }{\Omega}^2 \d{t}
		+ \frac{1}{2}  \norm[]{ \nabla_\helical \w (t) }{\Omega}^2
		\leq
		\frac{1}{2} \int_0^t \norm[]{ \nabla_\helical \w }{\Omega}^2 \d{t}
		+ C_r(\delta) \int_0^t \norm[]{\w}{\Omega}^2 \d{t}.
	\end{equation*}
	Applying the Poincar\'e inequality~\eqref{eq:poincare} to the last term, we finally get that	
	\begin{equation}
	\label{eq:strong:pre-gronwall-estimate}
		\big( C_\ell(\delta) - C_r(\delta) t^2 \big)
		\int_0^t \norm[]{ \partial_t \w }{\Omega}^2 \d{t}
		+ \frac{1}{2}  \norm[]{ \nabla_\helical \w (t) }{\Omega}^2
		\leq
		\frac{1}{2} \int_0^t \norm[]{ \nabla_\helical \w }{\Omega}^2 \d{t}.
	\end{equation}
	Note that $C_r(\delta) > 0$ for all $\delta > 0$, while $C_\ell(\delta) > 0$ only for $\delta > 0$ being sufficiently small.
	Choose $\delta$ small enough to ensure that $C_\ell(\delta) > 0$.
	Choose $0 < t^2 < C_\ell(\delta) / C_r(\delta) =: T_\ast^2$ so that $C_\ell(\delta) - C_r(\delta) t^2 > 0$.
	Then, the first term on the left-hand side of~\eqref{eq:strong:pre-gronwall-estimate} is positive. Hence, we can ignore it and obtain that
	\begin{equation*}
		\norm[]{ \nabla_\helical \w (t) }{\Omega}^2
		\leq
		\int_0^t \norm[]{ \nabla_\helical \w }{\Omega}^2 \d{t}
		\qquad \text{for all }
		0 < t < \min \{T, T_\ast\}.
	\end{equation*}
	By applying Gronwall's lemma (Lemma~\ref{prop:gronwall}), we conclude that $\norm[]{ \nabla_\helical \w (t) }{\Omega}^2 = 0$ for all $0 < t < \min \{T, T_\ast\}$.
	Using this in \eqref{eq:strong:pre-gronwall-estimate}, we obtain that
	\begin{equation*}
		\big( C_\ell(\delta) - C_r(\delta) t^2 \big)
		\int_0^t \norm[]{ \partial_t \w }{\Omega}^2 \d{t}
		\leq
		0.
	\end{equation*}
	Since the prefactor is positive, we have $\int_0^t \alpha \norm[]{ \partial_t \w }{\Omega}^2 \d{t} = 0$ and hence $\m_1(t) = \m_2(t)$ for all $0 \leq t \leq \min \{T, T_\ast\}$.
	Since $T_\ast$ depends only on the norm of the smooth functions $\m_1$ and $\vec{f}$, we can repeat this argument on any interval of the same length $T_\ast$, as long as $\m_1$ and $\m_2$ are defined.
	This concludes the proof.
\end{proof}

\begin{remark}\label{rem:smoothness}
	Note that the identity \eqref{eq:strong:estimate} relies on the regularity of $\m_2$, since $\Psi[\m_2]$ contains the helical Laplacian.
	However, in the proof of Theorem~\ref{theorem:strong-strong} we use integration by parts to obtain~\eqref{eq:strong:expanded-estimate}, which does not assume more regularity of $\w$ (and thus of $\m_2$) than $H^1(\Omega \times (0,T); \R^3)$.
	Our strategy for weak solutions in Section~\ref{sec:proof} is to reproduce the estimate~\eqref{eq:strong:expanded-estimate} by following the steps in this section.
\end{remark}

%\clearpage
%!TEX root = weak-strong.tex

%%%%%%%%%%%%%%%%%%%%%%%%%%%%%%%%%%%%%%%%%%%%%%%%%%%%%%%%%%%%%%%%%%%%%%%%%%%%%%%%%%%
%%%%%%%%%%%%%%%%%%%%%%%%%%%%%%%%%%%%%%%%%%%%%%%%%%%%%%%%%%%%%%%%%%%%%%%%%%%%%%%%%%%
\section{Proof of Theorem~\ref{theorem:main}}\label{sec:proof}
%%%%%%%%%%%%%%%%%%%%%%%%%%%%%%%%%%%%%%%%%%%%%%%%%%%%%%%%%%%%%%%%%%%%%%%%%%%%%%%%%%%
%%%%%%%%%%%%%%%%%%%%%%%%%%%%%%%%%%%%%%%%%%%%%%%%%%%%%%%%%%%%%%%%%%%%%%%%%%%%%%%%%%%

We proceed as in the proof of Theorem~\ref{theorem:strong-strong} and prove an estimate analogous to Lemma~\ref{lemma:strong-estimate}.
To this end, we need a preliminary result which mimics the identity~\eqref{eq:strong:remainder}.
Note that in~\eqref{eq:strong:remainder} the source term $\vec{f}$ does not show up on the left-hand side of the identity, whereas in the following estimates it is present on the left-hand sides in the energy and dissipation term.
This, however, is compensated by additional $\vec{f}$-terms on the right-hand sides, which will cancel out in the end.
In particular, the later proof of Theorem~\ref{theorem:main} can follow the overall structure of the proof of Theorem~\ref{theorem:strong-strong}.

\begin{lemma}\label{lemma:proof:weak-estimate}
	Suppose the assumptions of Theorem~\ref{theorem:main}.
	For all $0 < t < T$, the difference $\w := \m_2 - \m_1$ then satisfies that
	\begin{equation}
	\label{eq:proof:estimate1}
		\EE_\helical[\w(t), \vec{f}(t)]  +  \DD[\w, \vec{f}](t)
		\leq
		- \!\! \int_{0}^{t} \!\!
		\Big( \! \psi[\m_2, \partial_t \m_1]
		+ \skp{\partial_t \m_2}{\Psi[\m_1]}{\Omega}
		- 2 \skp{\vec{f}}{\partial_t \m_1}{\Omega} \!
		\Big) \d{t}.
	\end{equation}
\end{lemma}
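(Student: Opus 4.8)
The plan is to transfer the algebraic identity~\eqref{eq:strong:remainder} of the strong--strong case to the present low-regularity setting, replacing the two pointwise conservation laws by their integrated counterparts: the energy \emph{equality}~\eqref{eq:strong-energy-equality} (transferred to $\EE_\helical$ through the constant shift~\eqref{eq:def-helical-energy}) for the smooth solution $\m_1$, and the energy \emph{inequality}~\eqref{eq:energy-inequality-helical} for the weak solution $\m_2$. To organize the computation, I would write $a(\u,\vec v) := \skp{\nabla_\helical\u}{\nabla_\helical\vec v}{\Omega} - \skp{\u}{\boldsymbol{\pi}(\vec v)}{\Omega}$, which is symmetric since $\boldsymbol{\pi}$ is self-adjoint, so that $\EE_\helical[\u,\vec f] = \tfrac12 a(\u,\u) - \skp{\u}{\vec f}{\Omega}$ is quadratic plus linear in $\u$, and likewise $\DD[\u,\vec f]$ is quadratic plus linear in $\u$. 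Expanding both functionals along $\w = \m_2 - \m_1$ by bilinearity splits the left-hand side of~\eqref{eq:proof:estimate1} into the two self-contributions $\EE_\helical[\m_j(t),\vec f(t)] + \DD[\m_j,\vec f](t)$ and a family of cross terms. Bounding the $\m_2$-self-contribution by~\eqref{eq:energy-inequality-helical} and replacing the $\m_1$-self-contribution by its exact value via the energy equality (both solutions share the datum $\m_0$) shows that these two contributions together are bounded by $2\EE_\helical[\m_0,\vec f(0)]$; it then remains to match the cross terms with the right-hand side of~\eqref{eq:proof:estimate1}.

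The energy cross terms are $-a(\m_1(t),\m_2(t)) + 2\skp{\m_1(t)}{\vec f(t)}{\Omega}$, while the dissipation contributes $-2\int_0^t \big(\alpha\skp{\partial_t\m_2}{\partial_t\m_1}{\Omega} + \skp{\partial_t\vec f}{\m_1}{\Omega}\big)\d{t}$. The decisive step is to rewrite the stiffness term $a(\m_1(t),\m_2(t))$ so that it becomes differentiable in time despite the low regularity of $\m_2$. To this end I would integrate by parts in space with~\eqref{eq:auxiliary:helical-ibp}, moving \emph{both} helical derivatives onto the smooth factor $\m_1$; the boundary integral vanishes because $\m_1$ obeys~\eqref{eq:helical-boundary-condition}, and self-adjointness of $\boldsymbol{\pi}$ yields $a(\m_1(t),\m_2(t)) = -\skp{\m_2(t)}{\Delta_\helical\m_1(t) + \boldsymbol{\pi}(\m_1(t))}{\Omega} = \skp{\m_2(t)}{\Psi[\m_1(t)] - \alpha\partial_t\m_1(t)}{\Omega}$. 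The right-hand side now pairs $\m_2$ with a field that is $H^1$ in time, so $t \mapsto a(\m_1(t),\m_2(t))$ is absolutely continuous and the product rule applies. Differentiating, and integrating by parts in space once more onto $\partial_t\m_1$ (whose helical Neumann trace vanishes by differentiating~\eqref{eq:helical-boundary-condition} in time and using Lemma~\ref{lemma:helical-properties}(ii)), I would obtain
\[
a(\m_1(t),\m_2(t)) - a(\m_0,\m_0)
= \int_0^t \big( \psi[\m_2,\partial_t\m_1] + \skp{\partial_t\m_2}{\Psi[\m_1]}{\Omega} - 2\alpha\,\skp{\partial_t\m_2}{\partial_t\m_1}{\Omega} \big)\d{t}.
\]

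Finally I would collect the remaining $\vec f$- and initial-data contributions. Substituting the last identity, the two occurrences of $\int_0^t \alpha\skp{\partial_t\m_2}{\partial_t\m_1}{\Omega}\d{t}$ cancel, and the constant $a(\m_0,\m_0)$ combines with $2\EE_\helical[\m_0,\vec f(0)]$ to leave $-2\skp{\m_0}{\vec f(0)}{\Omega}$. Applying the fundamental theorem of calculus to the smooth map $t \mapsto \skp{\m_1(t)}{\vec f(t)}{\Omega}$ (using $\m_1(0) = \m_0$) turns the boundary term $2\skp{\m_1(t)}{\vec f(t)}{\Omega}$ together with $-2\int_0^t \skp{\partial_t\vec f}{\m_1}{\Omega}\d{t}$ and $-2\skp{\m_0}{\vec f(0)}{\Omega}$ into $2\int_0^t \skp{\vec f}{\partial_t\m_1}{\Omega}\d{t}$. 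Assembling all pieces produces exactly the right-hand side of~\eqref{eq:proof:estimate1}.

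I expect the main obstacle to be the regularity bookkeeping of the second step: since $\m_2$ belongs only to $H^1(\Omega \times (0,T))$, the object $\nabla_\helical\partial_t\m_2$ is not available, so every integration by parts in space and every differentiation in time must be carried out on the smooth strong solution $\m_1$. The representation $a(\m_1,\m_2) = -\skp{\m_2}{\Delta_\helical\m_1 + \boldsymbol{\pi}(\m_1)}{\Omega}$ is precisely what legitimizes differentiating the cross term in time and is the technical heart of the argument. A minor point is that, as~\eqref{eq:energy-inequality-helical} is only available for almost every final time, the estimate~\eqref{eq:proof:estimate1} is obtained for almost every $t \in (0,T)$, which is all that the subsequent Gronwall argument in the proof of Theorem~\ref{theorem:main} requires.
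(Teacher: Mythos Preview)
Your proposal is correct and follows essentially the same route as the paper: both expand $\EE_\helical[\w,\vec f]+\DD[\w,\vec f]$ into self- and cross-contributions, bound the self-terms by $2\EE_\helical[\m_0,\vec f(0)]$ via the energy (in)equality, and then rewrite the cross term $-\skp{\nabla_\helical\m_1}{\nabla_\helical\m_2}{\Omega}+\skp{\boldsymbol{\pi}(\m_1)}{\m_2}{\Omega}$ by integrating by parts in space onto $\m_1$, differentiating in time, and integrating by parts back onto $\partial_t\m_1$ (whose helical Neumann trace vanishes), while the $\vec f$-terms are handled by the fundamental theorem of calculus. The only difference is organizational---you bundle the helical-gradient and $\boldsymbol{\pi}$ contributions into a single symmetric form $a(\cdot,\cdot)$, whereas the paper treats them in two separate identities~\eqref{eq:proof:estimate-helical} and~\eqref{eq:proof:estimate-lowerorder}; your remark that the estimate holds for almost every $t$ (inherited from~\eqref{eq:energy-inequality-helical}) is a valid caveat that the paper glosses over.
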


\begin{proof}
	We start by examining the helical energy of the difference $\w$:
	\begin{equation}
	\label{eq:proof:energy-difference}
	\begin{split}
		\EE_\helical[\w(t), \vec{f}(t)]
		& \stackrel{\eqref{eq:def-helical-energy}}{=}
		\EE_\helical[\m_1(t), \vec{f}(t)] + \EE_\helical[\m_2(t), \vec{f}(t)]
		+ 2\skp{\vec{f}(t)}{\m_1(t)}{\Omega} \\
		& \qquad
		- \skp{\nabla_\helical \m_1(t)}{\nabla_\helical \m_2(t)}{\Omega}
		+ \skp{\boldsymbol{\pi}(\m_1(t))}{\m_2(t)}{\Omega}.
	\end{split}
	\end{equation}
	Analogously, we can treat the dissipation terms of $\w$ to obtain that
	\begin{equation}
	\label{eq:proof:dissipative-difference}
		\DD[\w, \vec{f}](t)
		\stackrel{\eqref{eq:def-disspative-terms}}{=}
		\DD[\m_1, \vec{f}](t) + \DD[\m_2, \vec{f}](t)
		- 2 \int_{0}^{t} \Big( \skp{\partial_t \vec{f}}{\m_1}{\Omega} 
		+ \alpha \skp[]{\partial_t \m_1}{\partial_t \m_2}{} \Big) \d{t}.
	\end{equation}
	We estimate all terms of the right-hand sides of~\eqref{eq:proof:energy-difference}--\eqref{eq:proof:dissipative-difference}, starting with the energy and dissipation terms of $\m_1$ and $\m_2$.
	The energy inequality~\eqref{eq:energy-inequality-helical} yields that
	\begin{equation}
	\label{eq:proof:estimate-energy}
	\begin{split}
		&\EE_\helical[\m_1(t), \vec{f}(t)] + \DD[\m_1, \vec{f}](t)
		+ \EE_\helical[\m_2(t), \vec{f}(t)] + \DD[\m_2, \vec{f}](t) \\
		& \hspace{80pt} \stackrel{\eqref{eq:energy-inequality-helical}}{\leq}
		2 \, \EE_\helical[\m_0, \vec{f}(0)]
		\stackrel{\eqref{eq:def-helical-energy}}{=}
		\norm{\nabla_\helical \m_0}{\Omega}^2
		- \skp{\boldsymbol{\pi}(\m_0)}{\m_0}{\Omega}
		- 2 \skp{\vec{f}(0)}{\m_0}{\Omega}.
	\end{split}
	\end{equation}
	For the external field, we integrate by parts in time to obtain that
	\begin{equation}
	\label{eq:proof:estimate-external}
		2 \skp{\vec{f}(t)}{\m_1(t)}{\Omega}
		- 2 \int_{0}^{t} \skp{\partial_t \vec{f}}{\m_1}{\Omega} \d{t}
		= 2 \skp{\vec{f}(0)}{\m_0}{\Omega}
		+ 2 \int_{0}^{t} \skp{\vec{f}}{\partial_t \m_1}{\Omega} \d{t}.
	\end{equation}
	The terms involving the helical derivative can be integrated by parts in space and time.
	The boundary terms vanish since $\m_0$ and $\m_1$ satisfy the boundary condition of the strong form~\eqref{eq:helical-boundary-condition}.
	Hence, we obtain that
	\begin{align}
	\nonumber
		- \skp{\nabla_\helical \m_1(t)}{\nabla_\helical \m_2(t)}{\Omega}
		&\stackrel{\mathclap{\eqref{eq:auxiliary:helical-ibp}}}{=}
		\skp{\Delta_\helical \m_1(t)}{\m_2(t)}{\Omega}
		=
		\int_{0}^{t} \partial_t \skp{\Delta_\helical \m_1}{\m_2}{\Omega} \d{t}
		+ \skp{\Delta_\helical \m_0}{\m_0}{\Omega}\\
	\label{eq:proof:estimate-helical}
		&\stackrel{\mathclap{\eqref{eq:auxiliary:helical-ibp}}}{=}
		\int_{0}^{t}  \Big( - \skp{\partial_t \nabla_\helical \m_1}{\nabla_\helical \m_2}{\Omega} + \skp{\Delta_\helical \m_1}{\partial_t \m_2}{\Omega} \Big) \d{t}
		- \norm{\nabla_\helical \m_0}{\Omega}^2.
	\end{align}
	Note that integration by parts in space is necessary in order for the integration by parts in time to be well-defined.
	Applying integration by parts and using that $\boldsymbol{\pi}$ is self-adjoint, we get that
	\begin{equation}
	\label{eq:proof:estimate-lowerorder}
	\begin{split}
		&\skp{\boldsymbol{\pi}(\m_1(t))}{\m_2(t)}{\Omega}
		- \skp{\boldsymbol{\pi}(\m_0)}{\m_0}{\Omega} \\
		& \qquad = \int_{0}^{t} \Big( \skp{\boldsymbol{\pi}(\m_1(t))}{\partial_t \m_2(t)}{\Omega}
		+ \skp{\partial_t \m_1(t)}{\boldsymbol{\pi}(\m_2(t))}{\Omega} \Big) \d{t}.
	\end{split}
	\end{equation}
	
	Finally we sum the identities~\eqref{eq:proof:energy-difference}--\eqref{eq:proof:estimate-lowerorder}.
	After some cancellations we obtain that
	\begin{align*}
		\EE_\helical[\w(t), \vec{f}(t)]  +  \DD[\w, \vec{f}](t)
		\leq
		&- 2 \alpha \int_{0}^{t} \skp[]{\partial_t \m_1}{\partial_t \m_2}{} \d{t}
		+ 2 \int_{0}^{t} \skp{\vec{f}}{\partial_t \m_1}{\Omega} \d{t} \\
		&+ \int_{0}^{t}  \Big( - \skp{\partial_t \nabla_\helical \m_1}{\nabla_\helical \m_2}{\Omega}
			+ \skp{\Delta_\helical \m_1}{\partial_t \m_2}{\Omega} \Big) \d{t} \\
		&+ \int_{0}^{t} \Big( \skp{\boldsymbol{\pi}(\m_1(t))}{\partial_t \m_2(t)}{\Omega}
			+ \skp{\partial_t \m_1(t)}{\boldsymbol{\pi}(\m_2(t))}{\Omega} \Big) \d{t}.
	\end{align*}
	Combining the terms into $\psi$ and $\Psi$ according to~\eqref{eq:psi-operators}, we ultimately prove~\eqref{eq:proof:estimate1}.
\end{proof}

The result of the last lemma is the analogue of equation~\eqref{eq:strong:remainder} in the proof of Lemma~\ref{lemma:strong-estimate}.
There we commence to use the strong form~\eqref{eq:strong-reformulation} for both, $\m_1$ and $\m_2$.
However, this is not possible in \eqref{eq:proof:estimate1} due to the lack of regularity of $\m_2$.
For this reason, the first term in the right-hand side of~\eqref{eq:proof:estimate1} does \emph{not} read $\skp{\Psi[\m_2]}{\partial_t \m_1}{\Omega}$ and for the third, we cannot expand $\partial_t \m_2$ so easily.
Nevertheless, we recover a weak analogue of Lemma~\ref{lemma:strong-estimate} by an approximation argument.

\begin{lemma}
	Let the assumptions of Theorem~\ref{theorem:main} hold.
	For the difference $\w := \m_2 - \m_1$ and every $0 < t < T$, it holds that
	\begin{equation}
	\label{eq:proof:estimate2}
	\begin{split}
		\EE_\helical[\w(t), \vec{f}(t)] + \DD[\w, \vec{f}](t)
		&\leq
		\int_{0}^{t}
		\psi[\w, \w \times (\Psi[\m_1] - \vec{f})] \d{t} \\
		& \qquad \qquad - \skp{\vec{f}(t)}{\w(t)}{\Omega}
		+ \int_{0}^{t} \skp{\partial_t \vec{f}}{\w}{\Omega}
		\d{t}.
	\end{split}
	\end{equation}
\end{lemma}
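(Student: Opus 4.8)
The plan is to transform the right-hand side of \eqref{eq:proof:estimate1} into the right-hand side of \eqref{eq:proof:estimate2}, mimicking the passage from \eqref{eq:strong:remainder} to \eqref{eq:strong:estimate} in the proof of Lemma~\ref{lemma:strong-estimate}, but using the weak formulation of $\m_2$ wherever the strong form is unavailable. Abbreviating $\vec{g} := \Psi[\m_1] - \vec{f}$ (which is well defined and of class $C^1$ because $\m_1 \in C^3(\overline{\Omega}\times[0,T])$ and $\boldsymbol{\pi}(\m_1), \vec{f}$ are $C^1$ in space--time), I would first expand the target integrand by bilinearity of $\psi$ in both arguments,
\[
\psi[\w, \w\times\vec{g}] = \psi[\m_2,\m_2\times\vec{g}] - \psi[\m_2,\m_1\times\vec{g}] - \psi[\m_1,\m_2\times\vec{g}] + \psi[\m_1,\m_1\times\vec{g}],
\]
and treat the four contributions separately.

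For the two terms whose first argument is the strong solution $\m_1$, I would use \eqref{eq:proof:Kcorrespondence}: since $\m_1$ satisfies the boundary condition \eqref{eq:helical-boundary-condition}, the boundary contribution drops out and $\psi[\m_1,\cdot] = \skp{\Psi[\m_1]}{\cdot}{\Omega}$. Writing $\Psi[\m_1] = \vec{g}+\vec{f}$ and discarding the terms $\skp{\vec{g}}{\m_j\times\vec{g}}{\Omega} = 0$ (as $\m_j\times\vec{g}\perp\vec{g}$), these reduce to pure $\vec{f}$-pairings; moreover $\m_1\times\vec{g} = \partial_t\m_1$ by the strong form \eqref{eq:strong-reformulation}, which identifies $\psi[\m_2,\m_1\times\vec{g}]$ with the term $\psi[\m_2,\partial_t\m_1]$ already appearing in \eqref{eq:proof:estimate1}. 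For the first contribution, which is the one carrying the missing regularity of $\m_2$, I would invoke the weak formulation \eqref{eq:weak-reformulation} for $\m_2$ on $(0,t)$ with the admissible test function $\vec{\varphi} := \vec{g}$; using $\vec{g}\times\m_2 = -\m_2\times\vec{g}$, this expresses $\int_0^t\psi[\m_2,\m_2\times\vec{g}]\d{t}$ through $\skp{\partial_t\m_2}{\vec{g}}{\Omega}$ and an $\vec{f}$-term.

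Collecting these identities, the scalar-triple-product $\vec{f}$-terms cancel pairwise, and the right-hand side of \eqref{eq:proof:estimate1} is seen to equal $\int_0^t\psi[\w,\w\times\vec{g}]\d{t} - \int_0^t\skp{\partial_t\w}{\vec{f}}{\Omega}\d{t}$ (recall $\vec{g} = \Psi[\m_1]-\vec{f}$ and $\partial_t\w = \partial_t\m_2 - \partial_t\m_1$, so that the $\Psi[\m_1]$- and $\partial_t\m_1$-pairings match up). Finally, I would integrate the leftover term by parts in time; since $\w(0) = \m_0 - \m_0 = \vec{0}$, the boundary contribution at $0$ vanishes and $-\int_0^t\skp{\partial_t\w}{\vec{f}}{\Omega}\d{t} = -\skp{\vec{f}(t)}{\w(t)}{\Omega} + \int_0^t\skp{\partial_t\vec{f}}{\w}{\Omega}\d{t}$. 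Combined with the inequality \eqref{eq:proof:estimate1}, this is precisely \eqref{eq:proof:estimate2}.

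The delicate point is the admissibility of $\vec{\varphi} = \vec{g} = \Psi[\m_1]-\vec{f}$ in \eqref{eq:weak-reformulation}: the bilinear form $\psi[\m_2,\vec{g}\times\m_2]$ hides, through the Leibniz rule \eqref{eq:auxiliary:chainrule}, the third-order quantity $\nabla_\helical\Delta_\helical\m_1$, so that the whole computation rests on the full $C^3$ regularity of $\m_1$; the remaining factors $\vec{g}\times\nabla_\helical\m_2$ and $\boldsymbol{\pi}(\m_2)$ are controlled by $\m_2\in H^1$ and the $L^2$-boundedness of $\boldsymbol{\pi}$. I expect the main effort to be bookkeeping rather than any conceptual obstacle: one must verify that every scalar-triple-product term containing $\vec{f}$ indeed cancels so that the clean form \eqref{eq:proof:estimate2} emerges. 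A safe way to make the manipulations involving $\partial_t\m_2$ rigorous is to perform them for a smooth approximation of $\m_2$ and pass to the limit using the continuity of $\psi$ on $H^1(\Omega\times(0,t);\R^3)$.
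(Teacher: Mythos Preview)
Your argument is correct, and in fact slightly more economical than the paper's.  Both proofs invoke the weak formulation~\eqref{eq:weak-reformulation} with the same test function $\vec{g}=\Psi[\m_1]-\vec{f}$ and close with the same integration by parts in time.  The difference lies in how the cross terms are handled: the paper introduces a smooth approximation $\m_2^\varepsilon\to\m_2$ so that it can apply~\eqref{eq:proof:Kcorrespondence} with $\m_2^\varepsilon$ in the first slot, thereby producing $\Psi[\m_2^\varepsilon]$ and explicit boundary integrals on $\partial\Omega$ that must be tracked, combined into $-\skp{\nabla_\helical\w^\varepsilon\cdot\normalv}{\w^\varepsilon\times\vec{g}}{\partial\Omega}$, and then re-absorbed via a second application of~\eqref{eq:proof:Kcorrespondence} before passing to the limit.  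Your bilinear expansion of $\psi[\w,\w\times\vec{g}]$ sidesteps all of this: because you only ever apply~\eqref{eq:proof:Kcorrespondence} with the strong solution $\m_1$ in the first slot, the boundary term is killed immediately by~\eqref{eq:helical-boundary-condition}, and $\Psi[\m_2]$ never appears.  Consequently the smooth approximation you mention at the end as a ``safe way'' is in fact unnecessary in your organization of the proof---every $\psi$-term you write is already well defined for $\m_2\in H^1(\Omega\times(0,T);\S^2)$ and $\vec{g}\in H^1(\Omega\times(0,T);\R^3)$.  What your route buys is brevity and the elimination of boundary bookkeeping; what the paper's route makes more transparent is the parallel with the strong--strong computation~\eqref{eq:strong:remainder}, where one manipulates $\Psi[\m_j]$ pointwise.
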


\begin{proof}
	Since $\m_2 \in H^1(\Omega \times (0,T); \S^2)$, there exists a sequence $(\m_2^\varepsilon)_\varepsilon \in C^\infty(\overline{\Omega} \times [0,T]; \R^3)$ such that $\m_2^\varepsilon \to \m_2$ in $H^1(\Omega \times (0,T); \R^3)$ for $\varepsilon \to 0$.
	We further define $\w^\varepsilon := \m_2^\varepsilon - \m_1$ and note that also $\w^\varepsilon \to \w = \m_2 - \m_1$ in $H^1(\Omega \times (0,T); \R^3)$ for $\varepsilon \to 0$.
	
	The estimate~\eqref{eq:proof:estimate1} now reads
	\begin{equation}
	\label{eq:proof:limes-estimate}
	\begin{split}
		&\EE_\helical[\w(t), \vec{f}(t)] + \DD[\w, \vec{f}](t) \\
		&\hspace{50pt} \leq
		- \lim\limits_{\varepsilon \to 0}
		\int_{0}^{t} \!\! \Big(
		\psi[\m_2^\varepsilon, \partial_t \m_1]
		+ \skp{\partial_t \m_2^\varepsilon}{\Psi[\m_1]}{\Omega}
		- 2 \skp{\vec{f}}{\partial_t \m_1}{\Omega}
		\Big) \d{t}.
	\end{split}
	\end{equation}
	First, we expand the third term in the integral with $\skp{\vec{f}}{\w^\varepsilon}{\Omega}$ to obtain that
	\begin{equation}
	\label{eq:proof:term2}
		- 2 \skp{\vec{f}}{\partial_t \m_1}{\Omega}
		=
		- \skp{\vec{f}}{\partial_t \m_1}{\Omega}
		- \skp{\vec{f}}{\partial_t \m_2^\varepsilon}{\Omega}
		+ \skp{\vec{f}}{\partial_t \w^\varepsilon}{\Omega}.
	\end{equation}
	The first terms on the right hand sides of~\eqref{eq:proof:limes-estimate}--\eqref{eq:proof:term2} can be treated with \eqref{eq:proof:Kcorrespondence} and the strong form of LLG~\eqref{eq:strong-reformulation} to obtain that
	\begin{align}
	\nonumber
		\psi[\m_2^\varepsilon, \partial_t \m_1]
		&- \skp{\vec{f}}{\partial_t \m_1}{\Omega}
		\stackrel{\mathclap{\eqref{eq:proof:Kcorrespondence}}}{=}
		\skp{\Psi[\m_2^\varepsilon] - \vec{f}}{\partial_t \m_1}{\Omega} + \skp{\nabla_\helical \m_2^\varepsilon \cdot \normalv}{\partial_t \m_1}{\partial \Omega} \\
	\label{eq:proof:term1}
		&\stackrel{\mathclap{\eqref{eq:strong-reformulation}}}{=}
		\skp{\Psi[\m_2^\varepsilon] - \vec{f}}{\m_1 \times (\Psi[\m_1] - \vec{f})}{\Omega}
		+ \skp{\nabla_\helical \m_2^\varepsilon \cdot \normalv}{\m_1 \times (\Psi[\m_1] - \vec{f})}{\partial \Omega}\\
	\nonumber
		&=
		\skp{\Psi[\w^\varepsilon]}{\m_1 \times (\Psi[\m_1] - \vec{f})}{\Omega}
		+ \skp{\nabla_\helical \m_2^\varepsilon \cdot \normalv}{\m_1 \times (\Psi[\m_1] - \vec{f})}{\partial \Omega}.
	\end{align}
	Similarly, the second terms on the right hand sides of~\eqref{eq:proof:limes-estimate}--\eqref{eq:proof:term2} can be combined by the weak form~\eqref{eq:weak-reformulation} for $\m_2$ and $\vec{\varphi} = \Psi[\m_1] - \vec{f}$.
	We get that
	\begin{align}
	\nonumber
		\lim\limits_{\varepsilon \to 0} &\int_{0}^{t} \skp{\partial_t \m_2^\varepsilon}{\Psi[\m_1] - \vec{f}}{\Omega} \d{t} \\
	\label{eq:proof:term3}
		&\stackrel{\mathclap{\eqref{eq:weak-reformulation}}}{=}
		\lim\limits_{\varepsilon \to 0} \int_{0}^{t} \Big( \psi[\m_2^\varepsilon, (\Psi[\m_1] - \vec{f}) \times \m_2^\varepsilon] - \skp{\vec{f}}{(\Psi[\m_1] - \vec{f}) \times \m_2^\varepsilon}{\Omega} \Big) \d{t}\\
	\nonumber
		&\stackrel{\mathclap{\eqref{eq:proof:Kcorrespondence}}}{=}
		\lim\limits_{\varepsilon \to 0} \int_{0}^{t} \!\!\! \Big( \! \skp{\Psi[\m_2^\varepsilon] - \vec{f}}{(\Psi[\m_1] - \vec{f}) \! \times \! \m_2^\varepsilon}{\Omega}
		+ \skp{\nabla_\helical \m_2^\varepsilon \cdot \normalv}{(\Psi[\m_1] - \vec{f}) \! \times \! \m_2^\varepsilon}{\partial \Omega} \! \Big) \! \d{t}\\
	\nonumber
		&=
		\lim\limits_{\varepsilon \to 0} \int_{0}^{t} \Big( \skp{\Psi[\w^\varepsilon]}{(\Psi[\m_1] - \vec{f}) \! \times \! \m_2^\varepsilon}{\Omega}
		+ \skp{\nabla_\helical \m_2^\varepsilon \cdot \normalv}{(\Psi[\m_1] - \vec{f}) \! \times \! \m_2^\varepsilon}{\partial \Omega} \Big) \d{t},
	\end{align}
	where the last equality sign comes from $\Psi[\w^\varepsilon] = (\Psi[\m_2^\varepsilon] - \vec{f}) - (\Psi[\m_1] - \vec{f})$ and properties of the cross product.
	
	Using that $\m_1$ is a strong solution an thus satisfies $\nabla_\helical \m_1 \cdot \normalv = 0$, and combining the boundary terms from \eqref{eq:proof:term1}--\eqref{eq:proof:term3} yields that
	\begin{equation}
	\label{eq:proof:term4}
	\begin{split}
		&\skp{\nabla_\helical \m_2^\varepsilon \cdot \normalv}{\m_1 \times (\Psi[\m_1] - \vec{f})}{\partial \Omega}
		+ \skp{\nabla_\helical \m_2^\varepsilon \cdot \normalv}{(\Psi[\m_1] - \vec{f}) \! \times \! \m_2^\varepsilon}{\partial \Omega} \\
		& \quad  =
		- \skp{\nabla_\helical \m_2^\varepsilon \cdot \normalv}{\w^\varepsilon \times (\Psi[\m_1] - \vec{f})}{\partial \Omega}
		\stackrel{\eqref{eq:helical-boundary-condition}}{=}
		- \skp{\nabla_\helical \w^\varepsilon \cdot \normalv}{\w^\varepsilon \times (\Psi[\m_1] - \vec{f})}{\partial \Omega}.
	\end{split}
	\end{equation}
	By combining~\eqref{eq:proof:limes-estimate}--\eqref{eq:proof:term4}, we get that
	\begin{align*}
		&\EE_\helical[\w(t), \vec{f}(t)] + \DD[\w, \vec{f}](t) \\
		& \leq
		- \lim\limits_{\varepsilon \to 0}
		\int_{0}^{t} \!\! \Big( \skp{\Psi[\w^\varepsilon]}{\m_1 \! \times \! (\Psi[\m_1] - \vec{f})}{\Omega}
		+ \skp{\Psi[\w^\varepsilon]}{(\Psi[\m_1] - \vec{f}) \! \times \! \m_2^\varepsilon}{\Omega} \\
		& \hspace{150pt}
		+ \skp{\vec{f}}{\partial_t \w^\varepsilon}{\Omega}
		- \skp{\nabla_\helical \w^\varepsilon \cdot \normalv}{\w^\varepsilon \times (\Psi[\m_1] - \vec{f})}{\partial \Omega} \Big) \d{t} \\
		& =
		\lim\limits_{\varepsilon \to 0}
		\int_{0}^{t} \!\!\! \Big( \! \skp{\Psi[\w^\varepsilon]}{\w^\varepsilon \! \times \! (\Psi[\m_1] - \vec{f})}{\Omega}
		- \skp{\vec{f}}{\partial_t \w^\varepsilon}{\Omega}
		+ \skp{\nabla_\helical \w^\varepsilon \cdot \normalv}{\w^\varepsilon \! \times \! (\Psi[\m_1] - \vec{f})}{\partial \Omega} \! \Big) \! \d{t}\\
		& \stackrel{\mathclap{\eqref{eq:proof:Kcorrespondence}}}{=}
		\int_{0}^{t} \Big( \psi[\w, \w \times (\Psi[\m_1] - \vec{f})]
		- \skp{\vec{f}}{\partial_t \w}{\Omega} \Big) \d{t}.
	\end{align*}
	Integrating $\int_{0}^{t} \skp{\vec{f}}{\partial_t \w}{\Omega} \d{t}$ by parts in time and using that $\w(0) = 0$, we obtain that
	\begin{equation*}
		- \int_{0}^{t} \skp{\vec{f}}{\partial_t \w}{\Omega} \d{t}
		=
		- \skp{\vec{f}(t)}{\w(t)}{\Omega}
		+ \int_{0}^{t} \skp{\partial_t \vec{f}}{\w}{\Omega} \d{t},
	\end{equation*}
	and thus~\eqref{eq:proof:estimate2}.
\end{proof}

We have now collected all tools to prove our main theorem.

\begin{proof}[Proof of Theorem~\ref{theorem:main}]
	Starting from the estimate~\eqref{eq:proof:estimate2}, we expand all terms.
	This yields that
	\begin{align*}
		\frac{1}{2} \norm{\nabla_\helical \w}{\Omega}^2
		& - \frac{1}{2} \skp{\w(t)}{\boldsymbol{\pi}(\w(t))}{\Omega}
		+ \alpha \int_0^t \norm{\partial_t \w}{\Omega}^2 \d{t}\\
		& \leq
		\int_0^t \Big(
			\alpha \skp{\partial_t \w}{\w \times (\Psi[\m_1] - \vec{f})}{\Omega}
			+ \skp{\nabla_\helical \w}{\w \times \nabla_\helical (\Psi[\m_1] - \vec{f})}{\Omega} \\
			& \quad - \skp{\boldsymbol{\pi}(\w)}{\w \times (\Psi[\m_1] - \vec{f})}{\Omega}
		\Big) \d{t}.
	\end{align*}
	We note that this is \eqref{eq:strong:expanded-estimate} integrated in time (since $\w(0) = \vec{0}$).
	With Remark~\ref{rem:smoothness}, we can follow the proof of Theorem~\ref{theorem:strong-strong} line by line to obtain $\int_0^T \alpha \norm[]{ \partial_t \w }{\Omega}^2 \d{t} = 0$.
	This concludes the proof
\end{proof}

\textbf{Acknowledgement:} The authors like to express their gratitude to the anonymous referee for pointing out 
an issue with the presentation of the lower-order terms in an earlier version of the 
manuscript.

\bibliographystyle{alpha}
\bibliography{literature}

\end{document}